\documentclass{amsart}
\usepackage {amsmath, amscd}
\usepackage {amssymb}
\usepackage {epsfig}
\usepackage {bm}
\usepackage {indentfirst} 
\usepackage{color}




\numberwithin{equation}{section}

\def\<{\langle}
\def\>{\rangle}

\def\BB{{\mathcal B}}
\def\CC{{\mathcal C}}

\def\TT{{\mathcal T}}

\def\bbR{\mathbb{R}}
\def\bbC{\mathbb{C}}
\def\bbZ{\mathbb{Z}}
\def\bbN{\mathbb{N}}

\def\bbD{\mathbb{D}}
\def\bbT{\mathbb{T}}

\def\1{\mathbf{1}}

\newcommand{\Tr}{\mathop{\rm Tr}}
\newcommand{\Arg}{\mathop{\rm Arg}}

\newtheorem{lemma}{Lemma}[section]
\newtheorem{theorem}[lemma]{Theorem}
\newtheorem{corollary}[lemma]{Corollary}

\theoremstyle{definition}

\newtheorem{example}[lemma]{Example}

\title{A Szeg\"o type theorem for truncated Toeplitz operators}

\author{Elizabeth Strouse}
\address{ Universit\'e de Bordeaux, Institut de Math\'ematiques de Bordeaux UMR 5251,
	351, cours de la Lib\'eration,
	F-33405 Talence cedex, France}
\email{Elizabeth.Strouse@math.u-bordeaux1.fr}

\author{Dan Timotin}
\address{Institute of Mathematics of the Romanian Academy, P.O. Box 1-764, Bucharest 
	014700, Romania}
\email{Dan.Timotin@imar.ro}

\author{Mohamed Zarrabi}
\address{ Universit\'e de Bordeaux, Institut de Math\'ematiques de Bordeaux UMR 5251,
	351, cours de la Lib\'eration,
	F-33405 Talence cedex, France}
\email{Mohamed.Zarrabi@math.u-bordeaux1.fr}

\subjclass[2010]{47B35, 30J10, 47A45}
\keywords{Model spaces, truncated Toeplitz operators,  Szeg\"o Theorem}

\begin{document}

\begin{abstract}
	Truncated Toeplitz operators are compressions of multiplication operators on $L^2$ to model spaces (that is, subspaces of $H^2$ which are invariant with respect to the backward shift). For this class of operators we prove certain Szeg\"o type theorems concerning the asymptotics of their compressions to an increasing chain of finite dimensional model spaces.
\end{abstract}

\maketitle
The Toeplitz operators are compressions of multiplication operators on the space $L^2(\bbT)$  to the Hardy space $H^2$; the multiplier is  called the symbol of the operator. With respect to the standard exponential basis, their matrices are constant along diagonals; if we truncate such a matrix considering only its upper left finite corner, we obtain classical Toeplitz matrices.

It does not come as a surprise that there are connections between the asymptotics of these Toeplitz matrices and the whole Toeplitz operator, or its symbol.
A central result is Szeg\"o's strong limit theorem and its variants (see, for instance,~\cite{BS} and the references within), which deal with the asymptotics of the eigenvalues of the Toeplitz matrix.

On the other hand, certain generalizations of Toeplitz matrices have attracted a great deal of attention in the last decade, namely compressions of multiplication operators to subspaces of the Hardy space which are invariant under the backward shift. These  ``model spaces'' are of the form $H^2\ominus uH^2$ with $u$ an inner function, and the compressions are called truncated Toeplitz operators. They have been formally introduced in~\cite{Sa}; see~\cite{GR} for a more recent survey. Although classical Toeplitz matrices have often been a starting point for investigating truncated Toeplitz operators, the latter may exhibit surprising properties.

It thus seems  natural to see whether an analogue of Szeg\"o's strong limit theorem can be obtained in this more general context. Viewed as truncated Toeplitz operators, the Toeplitz matrices act on model spaces corresponding to the inner functions $u(z)=z^n$, and Szeg\"o's theorem is about the asymptotical situation when  $n\to\infty$. The natural generalization is then to consider a sequence of zeros $(\lambda_j)$ in $\bbD$, and to let the truncations act on the model space corresponding to the finite Blaschke product associated to $\lambda_j$, $1\le j\le n$.

Such a result has been obtained in~\cite{B}; it deals with the asymptotics of the determinant of a truncated Toeplitz operator. Let us note that in the case of classical Toeplitz operators and matrices one has different variants of Szeg\"o's Theorem, either in terms of the determinant of the truncation, or in terms of the trace, and one can pass from one to the other. However, this is no longer true in our generalized context, where the two different classes of results do not have a visible connection.

The purpose of this paper is to find an analogue of the trace type Szeg\"o Theorem. We manage to obtain a complete result in the case when $(\lambda_j)$ is not a Blaschke sequence. The Blaschke case seems to be less prone to an elegant solution, and we have only partial conclusions.

The technique we use is inspired by one of the approaches to the Szeg\"o Theorem that is based on approximation by circulants (see for instance,~\cite{Gr}). In our case we use the analogue of circulants for  truncated Toeplitz operators, namely elements in the so-called 
Sedlock algebras~\cite{Se}.

The plan of the paper is the following. After a rather extensive preliminary section that introduces the basic notions, we discuss the special case of finite dimensional model spaces. We introduce then the Sedlock algebras in Section~\ref{se:circulants}, and prove a result important in its own right, Theorem~\ref{th:circulants as functions generalized}, which gives an alternate identification of these algebras. Sedlock algebras on finite dimensional model spaces are briefly discussed  in Section~\ref{se:sedlock and finite blaschke}, after which Section~\ref{se:approx by circulants} develops the approximation technique based on them. The main result, Theorem~\ref{th:general convergence for non-Blaschke}, is proved in Section~\ref{se:main theorem}. The last section discusses through some examples the problems that appear when considering Blaschke sequences.

\section{Preliminaries}

\subsection{Model spaces}
Let $H^2$ be the Hardy space of square integrable functions on the circle with negative Fourier coefficients equal to 0.
We recall that a \emph{model} space is a subspace of  $H^2$ which is invariant for the backwards shift, and that every such space is of the form
$K_B =H^2\ominus BH^2$ where $B$ is an inner function, i.e., an element of $H^2$ of modulus 1 almost everywhere. We write $P_B$ for orthogonal projection from $L^2 $ onto $K_B.$

We will often make the supplementary assumption that $B(0) = 0 $.  In this case $\hat{B}$ is defined by
$B = z\hat{B}.$

The reproducing kernels for $K_B$ at the points $\lambda \in \bbD$ are the functions
\[
k^B_{\lambda}(z)=\frac{1-\overline{B(\lambda)}B(z)}{1-\overline{\lambda}z}, \qquad z\in\bbD.
\]
When  $B$ has an angular derivative in the sense of Caratheodory at a point in $\zeta\in \bbT$,  that is, when the
nontangential limit of $B$ and $B^{'}$ exist in $\zeta$ with the limit of $B$ in $\zeta$ of module 1, then all functions in $K_B$ have a radial limit in $\zeta$, and the corresponding reproducing kernel $k^B_\zeta$ belongs to $K_B$. We have
\[
k^B_{\zeta}(z)=\frac{1-\overline{B(\zeta)}B(z)}{1-\overline{\zeta}z}={\zeta\overline{B(\zeta)}}\frac{B(\zeta)-B(z)}{\zeta-z} , \qquad z\in\bbD.
\]

We also use the fact that
\begin{equation}\label{eq:formula for norm of k_zeta}
\|k^B_\zeta\|^2= k^B_\zeta (\zeta )  =\lim_{z\to\zeta}\frac{1-{\overline{B(\zeta)}}B(z)}{1-\overline{\zeta}z} =\frac{\zeta B'(\zeta)}{B(\zeta)} =
|B'(\zeta)|.
\end{equation}
(The last equality follows from the fact that all of the above terms are positive real numbers.)

\subsection{Truncated Toeplitz operators}\label{sse:TTO basics}

	If $\phi \in L^2$, then the map $f\mapsto P_B  \phi f$ is linear from the dense subspace $H^\infty\cap K_B$ to $K_B$. When this map is bounded, it can be extended to a bounded linear operator on the whole $K_B$, that will be denoted by $T_B[\phi]=P_B M_\phi |K_B$ and called a \emph{truncated Toeplitz operator}.
For $\phi(z)=z$ one obtains the \emph{compressed shift}, which we will denote by $S_B$.
The closed linear space of all bounded truncated Toeplitz operators on $K_B$ will be denoted by $\TT_B$.

The function $\phi$ is called the \emph{symbol} of the operator.
It is known ~\cite{Sa} that any operator in $\TT_B$ has a   symbol $\phi$ in $K_B+\overline{K_B}$, which is unique in case $B(0)=0$. Such a symbol is called \emph{standard}. Also, if
 $B(0)=0$ we see that $1 \in K_B$, so that the orthogonal complement of the constants in $K_B$ equals
$zK_{\hat{B}}$. Thus:
$$K_B+\overline{K_B} = K_B \oplus {\overline{zK_{\hat{B}}}}$$
and standard symbols can be uniquely written in the form $\phi_+ + \overline{\phi_-}$, with $\phi_+\in K_B$ and $\phi_-\in zK_{\hat B}$. Moreover, using the
fact that for any inner function $U$ the map $h\mapsto \bar z\bar h U$ is an involution on $K_U$ one sees that
\[
\{\overline{\phi_-}: \phi_-\in zK_{\hat B}\}=
\{\bar B\psi_-: \psi_-\in zK_{\hat B}\}
\]
and so we will write our standard symbol (uniquely) as $\psi_+ +\bar B \psi_-$ with
$\psi_+\in K_B$ and $\psi_-\in zK_{\hat B}$.

For $\alpha\in\bbT$ one defines $S_B^\alpha=S_B+\alpha(1\otimes\hat B)$. One can check easily that the operators $S_B^\alpha$ (called  \emph{modified compressed shifts}) are unitary.  It is proved in~\cite{Sa} that the modified compressed shifts belong to $\TT_B$.

Suppose now that $a\in\bbD$, and define $b_a(z)=\frac{z-a}{1-\bar a z}$; $b_a(z)$ is an automorphism of the unit disk. We set $B^a=B\circ b_a$. The following result, that we will have the opportunity to use below, is Proposition~4.1 of~\cite{CGRW}.

\begin{lemma}\label{le:CGRW}
	The formula
	\begin{equation}\label{eq:definition of U_a}
	U_a(f)= \sqrt{b'_a}\cdot f\circ b_a = \frac{\sqrt{1-|a|^2}}{1-\bar a z} f\circ b_a
	\end{equation}
	defines a unitary operator
	 $U_a$ from $K_B$ to $K_{B^a}$, and for any  symbol $\phi$ we have
	\[
	U_a T_B[\phi]U_a^*=T_{B^a} [\phi\circ b_a].
	\]
\end{lemma}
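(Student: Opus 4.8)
The plan is to verify the two assertions in turn: first that $U_a$ is a well-defined unitary from $K_B$ onto $K_{B^a}$, and then the intertwining identity for truncated Toeplitz operators. For the unitarity, I would start from the well-known fact that composition with a disk automorphism, suitably weighted, is a unitary operator on $L^2(\bbT)$ and on $H^2$: if $V_a f = \sqrt{b_a'}\cdot (f\circ b_a)$, then $\|V_a f\|_{L^2(\bbT)} = \|f\|_{L^2(\bbT)}$ by the change of variables $\zeta \mapsto b_a(\zeta)$ on the circle, since $|b_a'|$ is exactly the modulus of the Jacobian; and $V_a$ preserves $H^2$ because $b_a$ maps $\bbD$ to $\bbD$ and $\sqrt{b_a'} = \sqrt{1-|a|^2}/(1-\bar a z)$ is itself in $H^\infty$ with bounded inverse. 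The key algebraic point is then that $V_a$ carries the subspace $BH^2$ onto $B^aH^2 = (B\circ b_a)H^2$: indeed $V_a(Bg) = \sqrt{b_a'}\,(B\circ b_a)(g\circ b_a) = (B\circ b_a)\cdot V_a g$, and as $g$ ranges over $H^2$ so does $V_a g$ (since $V_a$ is onto $H^2$). Hence $V_a$ maps $H^2\ominus BH^2 = K_B$ onto $H^2\ominus B^aH^2 = K_{B^a}$ unitarily, and $U_a := V_a|K_B$ is the claimed unitary.

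For the intertwining relation, the cleanest route is to use the defining (weak) formulation of truncated Toeplitz operators: for $f,g$ in the dense set of bounded functions in the relevant model space, $\langle T_B[\phi]f,g\rangle = \langle \phi f,g\rangle_{L^2}$, and similarly on $K_{B^a}$. So I would take $f,g \in H^\infty\cap K_B$, compute
\[
\langle U_a T_B[\phi]U_a^* (U_af), U_a g\rangle = \langle T_B[\phi]f, g\rangle = \langle \phi f, g\rangle_{L^2},
\]
and separately compute, using that $V_a$ is an $L^2$-isometry and that multiplication operators transform under composition as $M_{\phi\circ b_a} V_a = V_a M_\phi$,
\[
\langle T_{B^a}[\phi\circ b_a](U_af), U_ag\rangle = \langle (\phi\circ b_a)\,V_af,\, V_ag\rangle_{L^2} = \langle V_a(\phi f),\, V_ag\rangle_{L^2} = \langle \phi f, g\rangle_{L^2}.
\]
Since $U_a f$ ranges over a dense subset of $K_{B^a}$ as $f$ ranges over $H^\infty\cap K_B$ (here one uses that $V_a$ and its inverse preserve boundedness), the two operators agree, giving $U_a T_B[\phi]U_a^* = T_{B^a}[\phi\circ b_a]$.

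The only genuinely delicate point is the bookkeeping around boundedness and density: one must check that $U_a$ carries $H^\infty\cap K_B$ into (a dense subset of) $H^\infty\cap K_{B^a}$, which follows because $\sqrt{b_a'}$ and its reciprocal are in $H^\infty$, and that the identity extends by continuity from this dense set to all of $K_{B^a}$ in the case where $T_B[\phi]$ is bounded — with the understanding that $T_{B^a}[\phi\circ b_a]$ is then automatically bounded as well, being unitarily equivalent to $T_B[\phi]$. Everything else is a routine change of variables; the substance of the lemma is the single observation that $V_a$ conjugates $M_\phi$ into $M_{\phi\circ b_a}$ while carrying $BH^2$ onto $B^aH^2$.
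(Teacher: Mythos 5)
Your argument is correct and complete. The paper itself gives no proof of this lemma --- it is quoted verbatim as Proposition~4.1 of~\cite{CGRW} --- so there is no internal proof to compare against; your verification (unitarity of the weighted composition operator $V_a$ on $L^2(\bbT)$ via the change of variables $|b_a'|\,dm$, the identity $V_a(BH^2)=(B\circ b_a)H^2$ forcing $V_a(K_B)=K_{B^a}$, and the intertwining $M_{\phi\circ b_a}V_a=V_aM_\phi$ checked weakly on the dense set $H^\infty\cap K_B$, which $U_a$ preserves because $\sqrt{b_a'}$ and its reciprocal lie in $H^\infty$) is exactly the standard route taken in~\cite{CGRW} and is sound.
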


\subsection{Clark Measures}\label{sse:clark measures}

The following  measures have been introduced by Clark~\cite{Clark}.
For  $\alpha \in \bbT$, the function $\frac{\alpha+B(z)}{\alpha-B(z)}$ has positive real part, and therefore, by Herglotz theorem, there exists a finite positive measure $\mu_\alpha^B$ on $\bbT$ such that
\begin{equation}\label{eq:definition of mualpha}
\Re \left(\frac{\alpha+B(z)}{\alpha-B(z)}\right)=\int_\bbT P_{r,t}(\zeta) d\mu_\alpha^B (\zeta),
\end{equation}
where $P_{r,t}(\zeta)=\frac{1-r^2}{\vert \zeta - re^{it}\vert^2}$ is the Poisson kernel.
The measure is singular, and in case $B(0)=0$  we have $\|\mu_\alpha^B\|=1$. A result of Alexandrov states that for every continuous function $f$
\begin{equation}\label{eq:aleksandrov}
\int_\bbT f dm=\int_\bbT \left(\int_\bbT f d\mu_\alpha\right) dm(\alpha),
\end{equation}
where $m$ is the normalized Lebesgue measure.
We will often write $\mu_\alpha$ instead of $\mu_\alpha^B$ when there is a single inner function~$B$ involved.

We note that the Clark measures $\mu_\alpha^b$ are defined in the same way for any function $b$ in the unit ball of $H^\infty$. In particular for the zero function $b=0$, for every $\alpha$, $\mu_\alpha^b$ is the normalized Lebesgue measure.

The next theorem combines results from~\cite{Clark} and ~\cite{Po}.

\begin{theorem}\label{th:clarkpoltoratski} Suppose that $B$ is an inner function, $\alpha\in\bbT$, and $\mu_\alpha$ is defined by~\eqref{eq:definition of mualpha}. Then
	any function  $f\in K_B$ has a radial limit $f^*$ almost everywhere with respect to $\mu_\alpha$,  the operator $V:K_u\to L^2(\mu_\alpha)$ defined by $Vf=f^*$ is unitary, and $VS_B^\alpha=M_z V$.
\end{theorem}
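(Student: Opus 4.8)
The plan is to verify the three assertions in the order they are stated, building everything from the Herglotz representation~\eqref{eq:definition of mualpha}. First I would establish the existence of radial limits and the isometry property of $V$ on a convenient dense subspace, and only afterwards upgrade to all of $K_B$; the self-adjointness of the relevant calculation makes it natural to test against reproducing kernels. Concretely, a direct computation with the Poisson kernel shows that for $\lambda\in\bbD$ the Cauchy integral $\int_\bbT \frac{d\mu_\alpha(\zeta)}{1-\bar\zeta\lambda}$ equals $\frac{1}{1-\overline{B(\lambda)}/\alpha}$ (write $\frac{\alpha+B}{\alpha-B}=-1+\frac{2}{1-B/\alpha}$ and match real parts against~\eqref{eq:definition of mualpha}). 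Polarizing this identity in $\lambda$ and $\mu$ gives
\[
\int_\bbT \overline{k^B_\mu(\zeta)}\, k^B_\lambda(\zeta)\, d\mu_\alpha(\zeta)
= \langle k^B_\lambda, k^B_\mu\rangle_{K_B},
\]
after one simplifies using $|\alpha|=1$; this is precisely the statement that $V$ is isometric on the linear span of reproducing kernels, which is dense in $K_B$. Hence $V$ extends to an isometry $K_B\to L^2(\mu_\alpha)$, and Poltoratski's theorem (the ``\cite{Po}'' half of the cited combination) is what guarantees that for a general $f\in K_B$ the radial limit $f^*$ exists $\mu_\alpha$-a.e.\ and that $Vf=f^*$ is the genuine boundary-value extension of the abstract isometry, not merely a formal object.

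Next I would prove surjectivity. Since $\mu_\alpha$ is a finite positive measure on $\bbT$, the functions $\zeta\mapsto\frac{1}{1-\bar\zeta\lambda}$, $\lambda\in\bbD$, together with their conjugates span a dense subspace of $L^2(\mu_\alpha)$ (this is a standard consequence of the Stone–Weierstrass theorem applied to $C(\bbT)$, using that $\mu_\alpha$ is finite and that rational functions with poles off $\bbT$ are uniformly dense in $C(\bbT)$). The image $VK_B$ contains all $k^B_\lambda$, and from the explicit formula for $k^B_\lambda$ one sees that the span of the $k^B_\lambda$ and of the Cauchy kernels $\frac{1}{1-\bar\zeta\lambda}$ generate the same closed subspace of $L^2(\mu_\alpha)$; therefore $VK_B$ is dense, and being also closed (as the range of an isometry on a Hilbert space), it is all of $L^2(\mu_\alpha)$. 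Thus $V$ is unitary.

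Finally, the intertwining relation $VS_B^\alpha=M_zV$. Recall $S_B^\alpha=S_B+\alpha(1\otimes\hat B)$ with $S_B=P_BM_z|K_B$. It suffices to check $V(S_B^\alpha k^B_\lambda)=M_z V k^B_\lambda$ for all $\lambda$, i.e.\ that $(S_B^\alpha k^B_\lambda)^*(\zeta)=\zeta\, k^B_\lambda(\zeta)$ for $\mu_\alpha$-a.e.\ $\zeta$. A short algebraic manipulation, using $P_B(zk^B_\lambda)=zk^B_\lambda-\langle zk^B_\lambda,1\rangle\,(\text{something in }BH^2)$ together with the identity $zk^B_\lambda = \bar\lambda(k^B_\lambda-1)+\bar\lambda\overline{B(\lambda)}\,\hat B\cdot(\cdots)$—more cleanly, the known formula $S_B^\alpha k^B_\lambda=\bar\lambda\big(k^B_\lambda- (1-\alpha\overline{B(\lambda)})\,k^B_{\text{?}}\big)$ which one derives directly—shows that the difference $\zeta k^B_\lambda(\zeta)-(S_B^\alpha k^B_\lambda)(\zeta)$, as a boundary function, vanishes $\mu_\alpha$-a.e.\ precisely because $\mu_\alpha$ is carried by the set $\{\zeta:B(\zeta)=\alpha\}$ in the appropriate boundary sense (this is where~\eqref{eq:definition of mualpha} is used a second time). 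I expect this last point—justifying that one may evaluate these $H^2$ functions on the carrier of the singular measure $\mu_\alpha$ and that the ``extra'' rank-one term $\alpha(1\otimes\hat B)$ is exactly what compensates the defect of $M_z$ acting on $K_B$—to be the main obstacle; it is handled cleanly by invoking Poltoratski's theorem (so that radial limits behave well $\mu_\alpha$-a.e.) rather than by a bare-hands boundary computation.
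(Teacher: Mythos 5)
The paper does not actually prove this theorem: it is stated as a combination of results quoted from Clark and Poltoratski, so there is no internal proof to compare against. Judged on its own terms, your outline follows the standard route (isometry on reproducing kernels via the polarized Herglotz identity, Poltoratski's theorem for the $\mu_\alpha$-a.e.\ existence of boundary values, verification of the intertwining on kernels), and the first and third steps can be carried out essentially as you indicate --- except that your formula for $S_B^\alpha k^B_\lambda$ still contains a literal ``?'' and the computation is not actually done. A correct version: $(S_B^\alpha)^*k^B_\lambda=\bar\lambda k^B_\lambda+(\bar\alpha-\overline{B(\lambda)})\hat B$; since $\hat B(\zeta)=B(\zeta)\bar\zeta=\alpha\bar\zeta$ and $k^B_\lambda(\zeta)=(1-\overline{B(\lambda)}\alpha)(1-\bar\lambda\zeta)^{-1}$ for $\mu_\alpha$-a.e.\ $\zeta$, one checks $V(S_B^\alpha)^*=M_{\bar z}V$ and takes adjoints. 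Note also a sign-of-conjugation slip: the Cauchy transform of $\mu_\alpha$ is $(1-\bar\alpha B(\lambda))^{-1}$ (analytic in $\lambda$), not $(1-\bar\alpha\overline{B(\lambda)})^{-1}$.

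The genuine gap is in your surjectivity argument. Stone--Weierstrass gives density in $C(\bbT)$, hence in $L^2(\mu_\alpha)$, of the span of the Cauchy kernels \emph{together with their conjugates}; but $VK_B$ only contains the analytic Cauchy kernels (up to the nonzero constants $1-\overline{B(\lambda)}\alpha$), so density of $VK_B$ does not follow from what you wrote --- for $d\mu=dm$ the closed span of $\{(1-\bar\lambda\zeta)^{-1}:\lambda\in\bbD\}$ is $H^2\subsetneq L^2$. What saves the argument is precisely the singularity of $\mu_\alpha$ (which holds because $\Re\frac{\alpha+B}{\alpha-B}=0$ $m$-a.e.\ on $\bbT$, so the absolutely continuous part of $\mu_\alpha$ vanishes): if $g\in L^2(\mu_\alpha)$ is orthogonal to all the Cauchy kernels, then the Cauchy transform of $\bar g\,d\mu_\alpha$ vanishes, hence by Lemma~\ref{le:cauchy transform} $\bar g\,d\mu_\alpha=\bar\phi\,dm$ with $\phi\in H^1_0$, and since the two sides are mutually singular both vanish, giving $g=0$. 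This is exactly the mechanism the paper itself deploys in the proof of Lemma~\ref{le:density of kernels}, so the repair is available in the paper's toolkit. You should also make explicit, in the isometry computation, that $B$ has nontangential limit $\alpha$ at $\mu_\alpha$-a.e.\ point of the carrier --- this is what licenses replacing $k^B_\lambda$ by $(1-\overline{B(\lambda)}\alpha)(1-\bar\lambda\zeta)^{-1}$ under the integral, and it is not automatic.
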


Suppose now that $\lambda_j\in \bbD$, $j\ge 1$. Define, for $n\ge 1$, 
\begin{equation}\label{eq:definition of Bn}
B_n=\prod_{j=1}^{n} (-\frac{|\lambda_j|}{\lambda_j}b_{\lambda_j})
\end{equation}
(By convention, in case $\lambda_j=0$, the corresponding factor will be $-b_0=-z$.)
The following lemma is well known;  we provide a proof for completeness.

\begin{lemma}\label{le:convergence of clark measures}
	Fix $\alpha\in\bbT$, and consider for each $n$ the measure $\mu_\alpha^{B_n}$.
	\begin{itemize}
		\item[(i)] If $\sum (1-|\lambda_j|)=\infty$,  then $\mu_\alpha^{B_n}$ converges in the weak star topology to  normalized Lebesgue measure.
		
		\item[(ii)] If $\sum (1-|\lambda_j|)<\infty$,  then $\mu_\alpha^{B_n}$ converges in the weak star topology to $\mu_\alpha^B$, where $B=\prod_{j=1}^{\infty} (-\frac{|\lambda_j|}{\lambda_j}b_{\lambda_j})$ is the infinite Blaschke product with zeros $\lambda_j$.
	\end{itemize}
	\end{lemma}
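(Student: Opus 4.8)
The plan is to reduce both statements to the pointwise convergence of the Herglotz functions attached to the measures by~\eqref{eq:definition of mualpha}. First I would isolate the elementary criterion that does the work: if $(\nu_n)$ are finite positive measures on $\bbT$ with $\sup_n\|\nu_n\|<\infty$ and $\nu$ is a finite positive measure with $\int_\bbT P_{r,t}\,d\nu_n\to\int_\bbT P_{r,t}\,d\nu$ for every $re^{it}\in\bbD$, then $\nu_n\to\nu$ weak star. This holds because the linear span of the Poisson kernels is dense in $C(\bbT)$ (a measure annihilating every $P_{r,t}$ has all Fourier coefficients zero, hence is zero), and on norm-bounded subsets of $C(\bbT)^*$ the weak star topology is metrizable, so it suffices to test convergence against a dense subset; equivalently, any weak star cluster point of $(\nu_n)$ must agree with $\nu$ on all Poisson kernels and so equals~$\nu$.

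With this in hand there are two things to check. The first is the uniform bound $\sup_n\|\mu_\alpha^{B_n}\|<\infty$: putting $z=0$ in~\eqref{eq:definition of mualpha} gives $\|\mu_\alpha^{B_n}\|=\Re\frac{\alpha+B_n(0)}{\alpha-B_n(0)}=\frac{1-|B_n(0)|^2}{|\alpha-B_n(0)|^2}$, and the normalizing factors in~\eqref{eq:definition of Bn} make $B_n(0)=\prod_{j=1}^n|\lambda_j|$, so $|B_n(0)|\le|\lambda_1|<1$ and $\|\mu_\alpha^{B_n}\|\le\frac{2}{1-|\lambda_1|}$ for all $n$. The second is the pointwise convergence: fixing $z=re^{it}\in\bbD$ and using~\eqref{eq:definition of mualpha} once more,
\[
\int_\bbT P_{r,t}\,d\mu_\alpha^{B_n}=\Re\frac{\alpha+B_n(z)}{\alpha-B_n(z)},
\]
so the whole question is the behaviour of $B_n(z)$, governed by the classical dichotomy for Blaschke products. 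If $\sum(1-|\lambda_j|)=\infty$, the estimate $1-|b_{\lambda_j}(z)|\ge\frac12(1-|\lambda_j|)\frac{1-|z|}{1+|z|}$ gives $\sum(1-|b_{\lambda_j}(z)|)=\infty$, hence $|B_n(z)|=\prod_{j=1}^n|b_{\lambda_j}(z)|\to0$; then the right-hand side tends to $\Re 1=1=\int_\bbT P_{r,t}\,dm$, and (i) follows from the criterion with $\nu=m$. If $\sum(1-|\lambda_j|)<\infty$, then $\prod_{j=1}^n(-\frac{|\lambda_j|}{\lambda_j}b_{\lambda_j})$ converges uniformly on compact subsets of $\bbD$ to $B$, so $B_n(z)\to B(z)$ and the right-hand side tends to $\Re\frac{\alpha+B(z)}{\alpha-B(z)}=\int_\bbT P_{r,t}\,d\mu_\alpha^B$, giving (ii) from the criterion with $\nu=\mu_\alpha^B$.

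Since the result is classical I do not expect a genuine obstacle; the two points that deserve care are the density of the Poisson kernels in $C(\bbT)$ (this is what turns pointwise convergence of Herglotz functions, together with a mass bound, into weak star convergence) and the uniform mass bound, which is precisely where the normalization $-|\lambda_j|/\lambda_j$ in~\eqref{eq:definition of Bn} is used through $|B_n(0)|=\prod|\lambda_j|$. One could instead argue concretely, using that each $\mu_\alpha^{B_n}$ is the (finitely supported) Clark measure of a finite Blaschke product, but the Herglotz route is cleaner and treats the two cases uniformly.
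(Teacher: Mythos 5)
Your proposal is correct and follows essentially the same route as the paper: both reduce weak star convergence to pointwise convergence of the Herglotz integrals $\Re\frac{\alpha+B_n(z)}{\alpha-B_n(z)}$ together with density of the span of Poisson kernels in $C(\bbT)$, using that $B_n\to 0$ (non-Blaschke case) or $B_n\to B$ (Blaschke case) locally uniformly. The only difference is that you make explicit the uniform bound $\|\mu_\alpha^{B_n}\|\le \frac{1+|B_n(0)|}{1-|B_n(0)|}\le\frac{2}{1-|\lambda_1|}$ needed to pass from a dense subset to all of $C(\bbT)$, a point the paper leaves implicit.
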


\begin{proof}
The sequence  $(B_n)_n$ converges uniformly on every compact set of $\bbD$ to  the function $B$, where $B$ is the zero function in case (i) and  the infinite Blaschke product with zeros $\lambda_j$ in case (ii).  For $0\leq r<1$ and $t\in\bbR$, the Poisson kernel is defined by $P_{re^{it}} (\zeta)=\frac{1-r^2}{\vert \zeta - re^{it}\vert^2}=\sum_{n\in\bbZ} r^{\vert n\vert} e^{-int}\zeta^n$. We have
 $$
 \int_\bbT P_{re^{it}} (\zeta)   d\mu_\alpha^{B_n} (\zeta)= \Re \left(\frac{\alpha+B_n(re^{it})}{\alpha-B_n(re^{it})}\right)\to
\Re \left(\frac{\alpha+B(re^{it})}{\alpha-B(re^{it})}\right)=\int_\bbT  P_{re^{it}} (\zeta) d\mu_\alpha^{B} (\zeta).
$$
 Since $span\{P_{re^{it}};\ 0\leq r<1,\ t\in\bbR\}$ is dense in the space  of continuous functions, we get that $(\mu_\alpha^{B_n})_n$ converges in the weak star topology to
$\mu_\alpha^B$. Now the proof is finished since $\mu_\alpha^B$ is the normalized Lebesgue measure when $B\equiv 0$.
\end{proof}

The behavior of the Clark measures with respect to change of variable by an automorphism of the disc is given by the next lemma, whose proof is a simple change of variable in~\eqref{eq:definition of mualpha}.

\begin{lemma}\label{le:change of Clark by automorphism}
	With the above notations, we have $\mu^{B^a}_\alpha= (b_{-a})_* (|b'_{-a}|\mu_\alpha^B)$.
\end{lemma}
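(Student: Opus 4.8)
The plan is to return to the Herglotz definition~\eqref{eq:definition of mualpha} of the Clark measure and carry out the change of variable induced by $b_a$ inside the Poisson integral. Write $z=re^{it}\in\bbD$ and abbreviate the Poisson kernel as $P_z(\zeta)=\frac{1-|z|^2}{|\zeta-z|^2}$, so that~\eqref{eq:definition of mualpha} reads $\Re\frac{\alpha+B(z)}{\alpha-B(z)}=\int_\bbT P_z\,d\mu_\alpha^B$. Since $B^a=B\circ b_a$, substituting $b_a(z)$ for $z$ gives at once
\[
\Re\frac{\alpha+B^a(z)}{\alpha-B^a(z)}=\Re\frac{\alpha+B(b_a(z))}{\alpha-B(b_a(z))}=\int_\bbT P_{b_a(z)}(\eta)\,d\mu_\alpha^B(\eta).
\]
Hence everything reduces to rewriting this last integral as $\int_\bbT P_z\,d\nu$ with $\nu=(b_{-a})_*(|b_{-a}'|\mu_\alpha^B)$; the identity $\mu_\alpha^{B^a}=\nu$ then follows from the uniqueness of a finite positive measure with a prescribed Poisson integral.

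The one computation that is actually needed is the transformation law of the Poisson kernel under the automorphism $b_a$. From $1-|b_a(z)|^2=\frac{(1-|a|^2)(1-|z|^2)}{|1-\bar az|^2}$, the elementary identity $b_a(z)-b_a(\eta)=\frac{(1-|a|^2)(z-\eta)}{(1-\bar az)(1-\bar a\eta)}$, and $|b_a'(\eta)|=\frac{1-|a|^2}{|1-\bar a\eta|^2}$ for $\eta\in\bbT$, together with $b_a^{-1}=b_{-a}$, one obtains
\[
P_{b_a(z)}(\eta)=P_z\big(b_{-a}(\eta)\big)\,|b_{-a}'(\eta)|,\qquad \eta\in\bbT.
\]
(Equivalently, this is the standard relation $P_{\phi(w)}(\phi(\xi))\,|\phi'(\xi)|=P_w(\xi)$ for a disc automorphism $\phi$, applied with $\phi=b_{-a}$, $w=b_a(z)$ and $\xi=\eta$.)

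Substituting this into the integral above and using the definition of the pushforward, namely $\int_\bbT f\,d\big[(b_{-a})_*\rho\big]=\int_\bbT (f\circ b_{-a})\,d\rho$ applied with $\rho=|b_{-a}'|\mu_\alpha^B$ and $f=P_z$ (legitimate since $P_z$ is continuous on $\bbT$ and $|b_{-a}'|$ is bounded there), yields
\[
\int_\bbT P_{b_a(z)}(\eta)\,d\mu_\alpha^B(\eta)=\int_\bbT P_z\big(b_{-a}(\eta)\big)|b_{-a}'(\eta)|\,d\mu_\alpha^B(\eta)=\int_\bbT P_z\,d\big[(b_{-a})_*(|b_{-a}'|\mu_\alpha^B)\big].
\]
Combined with the first display, $\mu_\alpha^{B^a}$ and $(b_{-a})_*(|b_{-a}'|\mu_\alpha^B)$ have the same Poisson integral, hence coincide. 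As a consistency check, at $z=0$ one has $P_0\equiv 1$ and $b_a(0)=-a$, so both measures have total mass $\Re\frac{\alpha+B(-a)}{\alpha-B(-a)}$.

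There is no genuine obstacle in this argument; the only thing that requires care is the bookkeeping — at which point the Jacobian factor $|b_{-a}'|$ is evaluated, and in which direction the pushforward runs — which is precisely what the displayed Poisson-kernel identity settles.
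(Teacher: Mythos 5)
Your argument is correct and is precisely the ``simple change of variable in~\eqref{eq:definition of mualpha}'' that the paper itself invokes without writing out: the Poisson-kernel transformation law $P_{b_a(z)}(\eta)=P_z(b_{-a}(\eta))\,|b'_{-a}(\eta)|$ plus uniqueness of the Herglotz representation. Nothing to add.
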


\subsection{The Cauchy transform}
For the next  facts about Cauchy transforms we refer to~\cite{CMR}. If $\mu$ is a Borel measure on $\bbT$, its Cauchy transform $K_\mu$ is the analytic function on $\bbD$ defined by
\[
K_\mu(z)=\int_{\bbT} \frac{1}{1-\bar{\zeta}z} \, d\mu(\zeta).
\]
The following lemma summarizes the properties of the Cauchy transform that we need.

\begin{lemma}\label{le:cauchy transform}
	Suppose $\mu$ is a Borel measure on $\bbT$. Then:
	\begin{enumerate}
		\item $K_\mu\in H^p$ for all $0<p<1$.
		
		\item $K_\mu\equiv 0$ if and only if $d\mu= \bar{\phi} \,dm$ for some $\phi\in H^1_0$,
where 	$H^1_0=\{f\in H^1,\ f(0)=0\}$.
	\end{enumerate}
\end{lemma}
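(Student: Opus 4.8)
The plan is to prove the two parts separately, relying on standard facts from \cite{CMR}. For part (1), I would recall that the Cauchy transform $K_\mu$ coincides, up to normalization, with the analytic completion of the Poisson integral of $\mu$: writing $\mu = \mu_a + \mu_s$ for the absolutely continuous and singular parts, and using that the function $z\mapsto (1-\bar\zeta z)^{-1}$ has real part comparable to the Poisson kernel plus a bounded term, one sees that $\mathrm{Re}\, K_\mu$ is (essentially) the Poisson integral of a finite measure. Then I would invoke the classical Kolmogorov-type estimate: the harmonic conjugate of the Poisson integral of a finite (signed) measure lies in weak-$L^1$, hence in $L^p(\bbT)$ for every $p<1$; combined with the fact that $K_\mu$ is analytic in $\bbD$, this yields $K_\mu \in H^p$ for all $0<p<1$. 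In practice this is exactly Theorem~\textup{(}Cauchy transforms are in $\bigcap_{p<1}H^p$\textup{)} from \cite{CMR}, so the ``proof'' here is a citation plus a one-line reduction.

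For part (2), I would argue via Fourier coefficients. Expanding $(1-\bar\zeta z)^{-1}=\sum_{k\ge 0}\bar\zeta^k z^k$ and integrating term by term gives
\[
K_\mu(z)=\sum_{k\ge 0}\widehat{\mu}(k)\, z^k,\qquad \widehat{\mu}(k)=\int_\bbT \bar\zeta^k\, d\mu(\zeta).
\]
Hence $K_\mu\equiv 0$ if and only if $\widehat\mu(k)=0$ for all $k\ge 0$. Now if $d\mu=\bar\phi\,dm$ with $\phi\in H^1_0$, then $\widehat\mu(k)=\int_\bbT \bar\zeta^k\bar\phi(\zeta)\,dm(\zeta)=\overline{\int_\bbT \zeta^k\phi(\zeta)\,dm(\zeta)}=\overline{\widehat\phi(-k)}$, which vanishes for $k\ge 0$ precisely because $\phi\in H^1_0$ (all Fourier coefficients of index $\le 0$ vanish: those of negative index since $\phi\in H^1$, and that of index $0$ since $\phi(0)=0$). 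Conversely, suppose $\widehat\mu(k)=0$ for all $k\ge 0$. Then $\mu$ must be absolutely continuous: indeed, the F.\ and M.\ Riesz theorem applies to any measure all of whose nonnegative Fourier coefficients vanish (equivalently, $\bar z\,d\bar\mu$ has vanishing negative coefficients), forcing $d\mu = \bar\phi\,dm$ for some $\phi\in L^1$; writing out $\widehat\mu(k)=\overline{\widehat\phi(-k)}=0$ for $k\ge 0$ then shows $\widehat\phi(j)=0$ for all $j\le 0$, i.e.\ $\phi\in H^1$ and $\phi(0)=\widehat\phi(0)=0$, so $\phi\in H^1_0$.

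The main obstacle — and the only genuinely nontrivial input — is the converse direction of part (2), which rests on the F.\ and M.\ Riesz theorem (analyticity of the Cauchy transform forces absolute continuity of a measure with one-sided spectrum). Everything else is bookkeeping with Fourier coefficients, and part (1) is a direct appeal to the weak-type $(1,1)$ behavior of the conjugate function. Since the lemma is stated as a summary of known results from \cite{CMR}, I would keep the write-up short: cite \cite{CMR} for part (1), and for part (2) give the Fourier-coefficient computation above together with a pointer to the F.\ and M.\ Riesz theorem for the nontrivial implication.
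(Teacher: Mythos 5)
Your proposal is correct; note that the paper itself gives no proof of this lemma at all --- it is stated explicitly as a summary of facts taken from the reference \cite{CMR}, so there is no in-paper argument to compare against. The ingredients you identify are the standard ones and are sound: part (1) reduces, via the identity $\frac{1}{1-\bar\zeta z}=\frac12\bigl(1+\frac{1+\bar\zeta z}{1-\bar\zeta z}\bigr)$, to the classical fact that Herglotz integrals of finite measures lie in $H^p$ for $p<1$ (Kolmogorov/Smirnov), and your Fourier-coefficient computation for part (2), with the F.\ and M.\ Riesz theorem supplying the converse implication, is exactly the argument one finds in \cite{CMR}.
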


Suppose $\Lambda=(\lambda_j)_{j\ge 1}$, and each $\lambda_j$  is repeated $m_j$ times, with $m_j$ a finite integer or infinite. We will denote by $\mathfrak{L}_\Lambda$ the linear span of $z^m k^{m+1}_{\lambda_j}$ and $\bar z^m \bar k^{m+1}_{\lambda_j}$, with $j\ge 1$ and $0\le m <m_j$.
The next lemma  is probably known, but  we have not found an appropriate reference.

\begin{lemma}\label{le:density of kernels}
 Suppose $\sum_{j=1}^{\infty}(1-|\lambda_j|)=\infty $. Then $\mathfrak{L}_\Lambda$ is dense in $C(\bbT)$.
\end{lemma}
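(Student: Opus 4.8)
The plan is to argue by duality. Since $C(\bbT)^*$ is the space $M(\bbT)$ of finite complex Borel measures on $\bbT$, it suffices to show that any $\nu\in M(\bbT)$ which annihilates $\mathfrak{L}_\Lambda$ must be the zero measure. So suppose
\[
\int_\bbT z^m k^{m+1}_{\lambda_j}\,d\nu=0\qquad\text{and}\qquad \int_\bbT \bar z^m \bar k^{m+1}_{\lambda_j}\,d\nu=0
\]
for all $j\ge1$ and $0\le m<m_j$, where $k_\lambda(z)=(1-\bar\lambda z)^{-1}$ is the Szeg\"o kernel; concretely these relations say that $\int_\bbT z^m(1-\bar\lambda_j z)^{-m-1}\,d\nu=0$ and $\int_\bbT \bar z^m(1-\lambda_j\bar z)^{-m-1}\,d\nu=0$.

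First I would attach to $\nu$ the two functions
\[
\Phi(u)=\int_\bbT\frac{d\nu(z)}{1-uz},\qquad \Psi(v)=\int_\bbT\frac{d\nu(z)}{1-v\bar z},
\]
both well defined and analytic on $\bbD$. Expanding the geometric series and integrating termwise gives $\Phi(u)=\sum_{n\ge0}\big(\int_\bbT z^n\,d\nu\big)u^n$ and $\Psi(v)=\sum_{n\ge0}\big(\int_\bbT\bar z^n\,d\nu\big)v^n$. Moreover $\Psi=K_\nu$ is the Cauchy transform of $\nu$, while $\Phi=K_{\tilde\nu}$ is the Cauchy transform of the push-forward $\tilde\nu$ of $\nu$ under $z\mapsto\bar z$; hence by Lemma~\ref{le:cauchy transform}(1) both $\Phi$ and $\Psi$ lie in $H^p$ for every $0<p<1$.

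Next, differentiating under the integral sign (legitimate since $|z|=1$ and $u$ stays in a compact subset of $\bbD$) yields $\Phi^{(m)}(u)=m!\int_\bbT z^m(1-uz)^{-m-1}\,d\nu$, so that $\Phi^{(m)}(\bar\lambda_j)=m!\int_\bbT z^m k^{m+1}_{\lambda_j}\,d\nu=0$ for $0\le m<m_j$; symmetrically $\Psi^{(m)}(\lambda_j)=m!\int_\bbT\bar z^m\bar k^{m+1}_{\lambda_j}\,d\nu=0$. Thus $\Phi$ vanishes at every $\bar\lambda_j$ (to order at least $m_j\ge1$) and $\Psi$ vanishes at every $\lambda_j$. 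Since a nonzero element of $\bigcup_{p>0}H^p$ has a zero set obeying the Blaschke condition, whereas $\sum_j(1-|\lambda_j|)=\sum_j(1-|\bar\lambda_j|)=\infty$, we conclude $\Phi\equiv0$ and $\Psi\equiv0$. Reading off the Taylor coefficients at the origin then gives $\int_\bbT z^n\,d\nu=0$ and $\int_\bbT\bar z^n\,d\nu=0$ for all $n\ge0$, so $\nu$ annihilates every trigonometric polynomial; as these are dense in $C(\bbT)$, we get $\nu=0$, which finishes the proof.

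The argument has no single delicate point: the substance is the observation that the defining relations of $\mathfrak{L}_\Lambda$ are exactly the statements that the $H^p$ functions $\Phi$ and $\Psi$ vanish, with multiplicity, at the $\bar\lambda_j$ and the $\lambda_j$, after which the non-Blaschke hypothesis forces them to vanish identically. The only steps needing (routine) care are the differentiation under the integral and the $H^p$ membership of $\Phi$ and $\Psi$, and it is precisely for the latter that Lemma~\ref{le:cauchy transform} is invoked.
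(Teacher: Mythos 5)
Your proof is correct. It shares with the paper's proof the central mechanism: an annihilating measure gives rise to Cauchy-transform-type functions in $H^p$ ($0<p<1$) whose derivatives up to order $m_j-1$ vanish at the points of $\Lambda$, and the failure of the Blaschke condition then forces these functions to vanish identically. Where you diverge is in the endgame. The paper first symmetrizes, replacing $\mu$ by $\tfrac12(\mu+\bar\mu)$ and $\tfrac1{2i}(\mu-\bar\mu)$ so as to work with a single real measure and a single Cauchy transform; it then has to invoke part (2) of Lemma~\ref{le:cauchy transform} (the F.~and~M.~Riesz-type statement that $K_\mu\equiv 0$ forces $d\mu=\bar\phi\,dm$ with $\phi\in H^1_0$) and finish by observing that a real-valued $H^1$ function vanishing at $0$ is zero. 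You instead keep the measure complex, introduce the second transform $\Phi=K_{\tilde\nu}$ attached to the reflected measure so that the two families of conditions in $\mathfrak{L}_\Lambda$ are used symmetrically, and conclude by reading off all moments $\int z^n\,d\nu$ and $\int\bar z^n\,d\nu$ from the Taylor coefficients of $\Phi$ and $\Psi$, reducing the final step to the density of trigonometric polynomials. This is a genuine simplification of the conclusion: you need only part (1) of Lemma~\ref{le:cauchy transform}, and no reduction to real measures; the paper's route, on the other hand, records along the way the slightly stronger structural fact about which measures have vanishing Cauchy transform. Both arguments are complete; the only points in yours requiring (routine) justification, which you correctly flag, are the termwise integration of the geometric series and the differentiation under the integral sign.
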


\begin{proof}
	Suppose $\mu$ is a Borel measure on $\bbT$ such that
$$\int_\bbT z^mk^{m+1}_{\lambda_j }(z) \, d\mu=\int_\bbT \bar z^m\bar k^{m+1}_{\lambda_j }(z) \, d\mu=0 $$ for all $j\ge 1$ and $0\le m <m_j$. By replacing $\mu$, if necessary, with $\frac{1}{2} (\mu+\bar{\mu})$ and $\frac{1}{2i} (\mu-\bar{\mu})$, we may assume that $\mu$ is real. We have then $K^{(m)}_\mu(\lambda_j)=0$ for all $j\ge 1$ and $0\le m <m_j$. Since $K_\mu\in H^p$ for $0<p<1$, if it is not identically zero its zeros would have to satisfy the Blaschke condition. So $K_\mu\equiv 0$, and then by Lemma~\ref{le:cauchy transform} $\mu=\bar\phi\, dm$ for some $\phi\in H^1_0$. Since $\mu$ is real, $\phi$ is real almost everywhere. Therefore $\phi\equiv 0$ and  $\mu\equiv0$, which finishes the proof.
\end{proof}

If $B_n$ is defined by~(\ref{eq:definition of Bn}), then each  of the functions $z^m k^{m+1}_{\lambda_j}$ belongs to $B_n$ for sufficiently large $n$. This remark yields the next corollary.

\begin{corollary}\label{co:desity fo kernels}
Suppose $\sum_{j=1}^{\infty}(1-|\lambda_j|)=\infty $, and $B_n$ is defined by ~(\ref{eq:definition of Bn}). Then $\bigvee_{n=1}^\infty (K_{B_n}+\overline{K_{B_n}})$ is dense in $C(\bbT)$.
\end{corollary}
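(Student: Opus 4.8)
The plan is to deduce Corollary~\ref{co:desity fo kernels} directly from Lemma~\ref{le:density of kernels} by checking that the two families of functions involved sit inside the spaces being unioned. First I would recall that, by definition, $\mathfrak{L}_\Lambda$ is the linear span of the functions $z^m k^{m+1}_{\lambda_j}$ and $\bar z^m \bar k^{m+1}_{\lambda_j}$, for $j\ge 1$ and $0\le m<m_j$; in the present situation all $\lambda_j$ are distinct (or we may take $m_j=1$ for every $j$), so $\mathfrak{L}_\Lambda$ is simply the span of the $k_{\lambda_j}=k^{?}_{\lambda_j}$ and their conjugates. The key observation, already flagged in the sentence preceding the corollary, is that $k^{m+1}_{\lambda_j}$ is (up to the appropriate reproducing-kernel normalization) a reproducing kernel for $K_{B_n}$ once $B_n$ contains the factor $b_{\lambda_j}$ with multiplicity at least $m+1$, which happens for all sufficiently large $n$; hence $z^m k^{m+1}_{\lambda_j}\in K_{B_n}$ and $\bar z^m\bar k^{m+1}_{\lambda_j}\in \overline{K_{B_n}}$ for $n$ large.

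Given that, the argument is short. Fix a basis element $g$ of $\mathfrak{L}_\Lambda$; by the remark it lies in $K_{B_n}+\overline{K_{B_n}}$ for all large $n$, hence in $\bigvee_{n=1}^\infty(K_{B_n}+\overline{K_{B_n}})$. Since this closed span is a linear subspace of $C(\bbT)$ (the relevant kernels are genuinely continuous on $\bbT$ because the Blaschke products $B_n$ are finite and the points $\lambda_j$ lie in the open disc), it contains all of $\mathfrak{L}_\Lambda$, and therefore contains the closure of $\mathfrak{L}_\Lambda$ in $C(\bbT)$. By Lemma~\ref{le:density of kernels}, under the hypothesis $\sum_{j}(1-|\lambda_j|)=\infty$ that closure is all of $C(\bbT)$, so $\bigvee_{n=1}^\infty(K_{B_n}+\overline{K_{B_n}})=C(\bbT)$, which is the assertion.

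The only point that needs a little care — and the one I would expect to be the main (minor) obstacle — is the bookkeeping around multiplicities and the precise form of the reproducing kernels: one must make sure that the functions $z^m k^{m+1}_{\lambda_j}$ appearing in the definition of $\mathfrak{L}_\Lambda$ really do belong to $K_{B_n}$, taking into account the normalizing unimodular constants $-|\lambda_j|/\lambda_j$ in~\eqref{eq:definition of Bn} and the convention for $\lambda_j=0$. Concretely, if $b_{\lambda_j}$ divides $B_n$ (to order at least $m+1$), then $\frac{1-\overline{B_n(\lambda_j)}B_n(z)}{1-\bar\lambda_j z}$ and its derivatives-in-parameter, i.e. the functions $z^m k^{m+1}_{\lambda_j}$, lie in $K_{B_n}$; since every $\lambda_j$ is eventually a zero of $B_n$ of multiplicity tending to infinity as $n\to\infty$, each generator of $\mathfrak{L}_\Lambda$ is captured. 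This is exactly the content of the parenthetical remark between the lemma and the corollary, so no new idea is required beyond spelling it out.
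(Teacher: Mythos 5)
Your proposal is correct and is essentially identical to the paper's own argument, which consists precisely of the remark that each generator $z^m k^{m+1}_{\lambda_j}$ of $\mathfrak{L}_\Lambda$ lies in $K_{B_n}$ for all sufficiently large $n$, so that $\mathfrak{L}_\Lambda\subset\bigvee_n(K_{B_n}+\overline{K_{B_n}})$ and the density follows from Lemma~\ref{le:density of kernels}. The only caveat is notational: $k^{m+1}_{\lambda_j}$ denotes the $(m+1)$-st power of the Szeg\H{o} kernel $\frac{1}{1-\bar\lambda_j z}$ (so $z^mk^{m+1}_{\lambda_j}$ is the derivative-evaluation kernel), which belongs to $K_{B_n}$ exactly when $\lambda_j$ is a zero of $B_n$ of multiplicity at least $m+1$ --- your bookkeeping paragraph captures this correctly.
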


\section{Finite dimensional model spaces}\label{se:finite dim}

Let us suppose now that $u=B$ is a finite Blaschke product of order $N$. In this case  $K_B$ is of dimension $N$ and contains only rational functions with poles outside the closed unit disc (more precisely, in the reciprocals of the zeros of $B$); we have  $\dim K_B=N$ and $\dim \TT_B=2N+1$.  The measure $\mu_\alpha^B$ is concentrated on the roots $\zeta_k^\alpha$ ($k=1,\dots, N$) of the equation $B(\zeta)=\alpha$, and is given by the formula
\begin{equation}\label{eq:definition of Clark measure}
\mu^B_\alpha =\sum_{k=1}^N \frac{1}{|B'(\zeta^\alpha_k)|}\delta_{\zeta^\alpha_k}.
\end{equation}
The functions in $K_B$ as well as the standard symbols of TTOs have well defined values with respect to $\mu_\alpha^B$.

We will have  the opportunity to use the next lemma, which completes~\eqref{eq:formula for norm of k_zeta}.

\begin{lemma}\label{le:blaschke argument derivative}
	Suppose $B=\prod_{j=1}^{n}b_{\lambda_j}$. Then we have:
	\begin{itemize}
		\item[(i)]	\begin{equation*}\label{eq:argument derivative}
		|B'(e^{it})|=	e^{it}\frac{B'(e^{it})}{B(e^{it})}
		= \sum_{j=1}^{n} \frac{1-|\lambda_j|^2}{|e^{it}-\lambda_j|^2}
		=\frac{d}{dt} \Arg B(e^{it}).
		\end{equation*}
		
		\item[(ii)] For all $\zeta\in\bbT$ we have $|B'(\zeta)|\ge \frac{1}{2}\sum_{j=1}^n(1-|\lambda_j|)$.
	\end{itemize}

\end{lemma}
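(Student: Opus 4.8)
The plan is to establish (i) by logarithmic differentiation of the product, reducing everything to a single Blaschke factor, and then to derive (ii) as an elementary termwise estimate.

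For (i), since $B=\prod_{j=1}^{n}b_{\lambda_j}$ and each factor $b_{\lambda_j}$ is zero-free and analytic in a neighbourhood of $\bbT$, logarithmic differentiation gives $\frac{B'}{B}=\sum_{j=1}^{n}\frac{b_{\lambda_j}'}{b_{\lambda_j}}$, so it suffices to treat one factor $b_a$, $a\in\bbD$. A direct quotient-rule computation yields $b_a'(z)=\frac{1-|a|^2}{(1-\bar a z)^2}$, hence
\[
z\,\frac{b_a'(z)}{b_a(z)}=\frac{z(1-|a|^2)}{(1-\bar a z)(z-a)}.
\]
I would then specialize to $z=e^{it}$ and use the circle identity $1-\bar a e^{it}=e^{it}\,\overline{(e^{it}-a)}$, which gives $(1-\bar a e^{it})(e^{it}-a)=e^{it}|e^{it}-a|^2$; therefore $e^{it}\frac{b_a'(e^{it})}{b_a(e^{it})}=\frac{1-|a|^2}{|e^{it}-a|^2}$, a positive real number. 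Summing over $j$ produces the middle identity
\[
e^{it}\frac{B'(e^{it})}{B(e^{it})}=\sum_{j=1}^{n}\frac{1-|\lambda_j|^2}{|e^{it}-\lambda_j|^2}.
\]
Since the right-hand side is a sum of positive reals, hence positive real, and $|B(e^{it})|=1$, it equals $|e^{it}B'(e^{it})|=|B'(e^{it})|$, which gives the first equality (the same reasoning already used in~\eqref{eq:formula for norm of k_zeta}). For the last equality I would write $B(e^{it})=e^{i\theta(t)}$ with $\theta(t)=\Arg B(e^{it})$ and differentiate: $ie^{it}B'(e^{it})=i\theta'(t)e^{i\theta(t)}=i\theta'(t)B(e^{it})$, so $\theta'(t)=e^{it}B'(e^{it})/B(e^{it})$, as claimed. (The case $\lambda_j=0$, i.e. the factor $b_0(z)=z$, is covered by the same formulas.)

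For (ii), I would apply (i) at a point $\zeta=e^{it}\in\bbT$ and bound each summand from below. Using $|\zeta-\lambda_j|\le 1+|\lambda_j|$ and $1-|\lambda_j|^2=(1-|\lambda_j|)(1+|\lambda_j|)$,
\[
\frac{1-|\lambda_j|^2}{|\zeta-\lambda_j|^2}\ \ge\ \frac{(1-|\lambda_j|)(1+|\lambda_j|)}{(1+|\lambda_j|)^2}\ =\ \frac{1-|\lambda_j|}{1+|\lambda_j|}\ \ge\ \frac{1-|\lambda_j|}{2},
\]
and summing over $j$ yields $|B'(\zeta)|\ge\frac12\sum_{j=1}^{n}(1-|\lambda_j|)$.

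There is no genuine obstacle here: the argument is a routine computation. The only points that call for a little care are the circle identity $1-\bar a e^{it}=e^{it}\,\overline{(e^{it}-a)}$, which is what turns the denominator into $|e^{it}-a|^2$, and the observation that a finite sum of positive reals is again positive real — precisely what licenses dropping the modulus in the first equality of (i).
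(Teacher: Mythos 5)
Your proposal is correct and follows essentially the same route as the paper: an explicit logarithmic-differentiation computation for the middle identity of (i), positivity to pass to the modulus, differentiation of the argument for the last equality, and the termwise bound $\frac{1-|\lambda_j|^2}{|\zeta-\lambda_j|^2}\ge\frac{1-|\lambda_j|}{1+|\lambda_j|}\ge\frac{1-|\lambda_j|}{2}$ for (ii). You merely spell out the computation the paper dismisses as routine, and phrase the argument-derivative step via $B(e^{it})=e^{i\theta(t)}$ instead of $\Arg B=\Im\log B$, which is an immaterial difference.
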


\begin{proof}
	(i) The second equality is just a computation, using the formula for $B$. Taking absolute values, we deduce the first equality.
	
	For the third equality, note that
	\[
	\Arg B(e^{it})= \Im \log (B(e^{it}))=
	\Im\left( \sum_{j=1}^{n} \log b_{\lambda_j} (e^{it}) \right).
	\]
	Differentiating with respect to $t$, we obtain
	\[
	\frac{d}{dt} \Arg B(e^{it})	=\Im \left( \sum_{j=1}^{n} i \frac{1-|\lambda_j|^2}{|e^{it}-\lambda_j|^2}\right)=\sum_{j=1}^{n} \frac{1-|\lambda_j|^2}{|e^{it}-\lambda_j|^2}
	\]
	as required.
	
	Part (ii) is a consequence of (i), once we note that for all $\zeta\in\bbT$ and $\lambda\in\bbD$ we have $1-|\lambda|\le |\zeta-\lambda|\le 1+|\lambda|$, and therefore
	\[
	\frac{1-|\lambda|}{1+|\lambda|}\le \frac{1-|\lambda|^2}{|\zeta-\lambda|^2}\le \frac{1+|\lambda|}{1-|\lambda|}. \qedhere
	\]
\end{proof}

For each $\alpha\in\bbT$ the reproducing kernels $k_{\zeta_k^\alpha}^B$, $k=1,\dots, N$, form an orthogonal base of $K_B$.
 The operator $S_B^\alpha$ has as eigenvalues $\zeta_k^\alpha$ ($k=1,\dots, N$),  with corresponding eigenvectors  $k_{\zeta_k^\alpha}^B$.

Consider a measure $\nu=\sum_{k=1}^N \nu_k \delta_{\zeta_k^\alpha}$ concentrated on the points $\zeta_k^\alpha$
and define the operator $D_\nu$ by
\begin{equation}\label{eq:definition of D_nu}
D_\nu= \sum_{k=1}^N \nu_k\left(
\frac{k^B_{\zeta_k^\alpha}}{\|k^B_{\zeta_k^\alpha}\|} \otimes
\frac{k^B_{\zeta_k^\alpha}}{\|k^B_{\zeta_k^\alpha}\|}  \right).
\end{equation}
Then $D_\nu$ is a function of $S_B^\alpha$, and therefore belongs to $\TT_B$. In particular, we will denote
$\Delta^\alpha_B=D_{\mu^B_\alpha}$; thus
\begin{equation}\label{eq:definition of Delta}
\Delta_B^\alpha:= \sum_{k=1}^N \frac{1}{|B'(\zeta_k^\alpha)|}\left(
\frac{k^B_{\zeta_k^\alpha}}{\|k^B_{\zeta_k^\alpha}\|} \otimes
\frac{k^B_{\zeta_k^\alpha}}{\|k^B_{\zeta_k^\alpha}\|}  \right),
\end{equation}

 The next lemma gives more information about the unitary operators that have appeared in Lemma~\ref{le:CGRW}.

\begin{lemma}\label{le:action of U_a on kernels and measures}
	Suppose $a\in\bbD$, $\alpha\in\bbT$, and  the operator $U_a:K_B\to K_{B^a}$ is defined by~\eqref{eq:definition of U_a}.
	\begin{itemize}
		\item[(i)] For each $k=1, \dots, N$,
		\[
	U_a(k^B_{\zeta_k^\alpha}) = \sqrt{b'_{-a}(\zeta_k^\alpha)}k^{B^a}_{\eta^\alpha_k}.
	\]
	where $\eta^\alpha_k= b_{-a}(\zeta^\alpha_k)$.
	\item[(ii)] If $\nu=\sum_{k=1}^N \nu_k \delta_{\zeta_k^\alpha}$, then
	\[
	U_aD_\nu U_a^*= D_{\nu^a},
	\]
	where $\nu^a=\sum_{k=1}^N \nu_k \delta_{\eta_k^\alpha}$.
	
	\item[(iii)] We have
	\[
	U_a\Delta^\alpha_B U^*_a= D_{\tilde{\nu}_\alpha^a},
	\]
	where
	\[
	\tilde{\nu}_\alpha^a=\frac{1}{|b'_{-a}\circ b_a|} \mu^{B^a}_\alpha =|b'_{a}|\mu^{B^a}_\alpha.
	\]
	\end{itemize}
\end{lemma}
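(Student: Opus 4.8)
The plan is to establish (i) by a direct substitution, and then to deduce (ii) and (iii) more or less formally from it together with the chain rule. First I would record that the points $\eta_1^\alpha,\dots,\eta_N^\alpha$ are exactly the roots on $\bbT$ of $B^a(\eta)=\alpha$, i.e.\ the Clark points of $B^a$ at $\alpha$: since $B^a=B\circ b_a$ and $b_a$ is an automorphism of $\bbD$ with inverse $b_{-a}$, the equation $B^a(\eta)=\alpha$ is equivalent to $b_a(\eta)=\zeta_k^\alpha$ for some $k$, i.e.\ to $\eta=b_{-a}(\zeta_k^\alpha)=\eta_k^\alpha$. In particular $D_{\nu^a}$ and $\mu^{B^a}_\alpha$ make sense as objects attached to $K_{B^a}$ through~\eqref{eq:definition of D_nu} and~\eqref{eq:definition of Clark measure}.

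\emph{Part (i).} I would compute $U_a(k^B_{\zeta_k^\alpha})$ directly. Using $k^B_{\zeta_k^\alpha}(z)=\frac{1-\bar\alpha B(z)}{1-\overline{\zeta_k^\alpha}z}$, $B(b_a(z))=B^a(z)$ and $\sqrt{b'_a(z)}=\frac{\sqrt{1-|a|^2}}{1-\bar a z}$, one gets
\[
U_a(k^B_{\zeta_k^\alpha})(z)=\frac{\sqrt{1-|a|^2}}{1-\bar a z}\cdot\frac{1-\bar\alpha B^a(z)}{1-\overline{\zeta_k^\alpha}\,b_a(z)}.
\]
A short computation gives $1-\overline{\zeta_k^\alpha}\,b_a(z)=\frac{(1+a\overline{\zeta_k^\alpha})-(\bar a+\overline{\zeta_k^\alpha})z}{1-\bar a z}$, and since $\frac{\bar a+\overline{\zeta_k^\alpha}}{1+a\overline{\zeta_k^\alpha}}=\overline{b_{-a}(\zeta_k^\alpha)}=\overline{\eta_k^\alpha}$, the numerator factors as $(1+a\overline{\zeta_k^\alpha})(1-\overline{\eta_k^\alpha}z)$. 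Cancelling $1-\bar a z$ then yields
\[
U_a(k^B_{\zeta_k^\alpha})(z)=\gamma_k\,\frac{1-\bar\alpha B^a(z)}{1-\overline{\eta_k^\alpha}z}=\gamma_k\,k^{B^a}_{\eta_k^\alpha}(z),\qquad\gamma_k:=\frac{\sqrt{1-|a|^2}}{1+a\overline{\zeta_k^\alpha}},
\]
and $|\gamma_k|^2=\frac{1-|a|^2}{|1+\bar a\zeta_k^\alpha|^2}=|b'_{-a}(\zeta_k^\alpha)|$, so that, once the branch of the square root is fixed compatibly with~\eqref{eq:definition of U_a}, the constant $\gamma_k$ is the scalar $\sqrt{b'_{-a}(\zeta_k^\alpha)}$ of the statement. (Equivalently, the same identity can be read off from the adjoint of~\eqref{eq:definition of U_a}, by computing $U_a^*k^{B^a}_\lambda$ for $\lambda\in\bbD$ and then specialising $\lambda=\eta_k^\alpha$.)

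\emph{Part (ii).} Conjugation by a unitary $U$ sends $x\otimes x$ to $(Ux)\otimes(Ux)$, so it suffices to treat one summand of $D_\nu$. By (i), $U_ak^B_{\zeta_k^\alpha}=\gamma_k k^{B^a}_{\eta_k^\alpha}$ with $\gamma_k\ne0$, and since $U_a$ is isometric $|\gamma_k|\,\|k^{B^a}_{\eta_k^\alpha}\|=\|k^B_{\zeta_k^\alpha}\|$; hence $U_a\left(\frac{k^B_{\zeta_k^\alpha}}{\|k^B_{\zeta_k^\alpha}\|}\right)=\frac{\gamma_k}{|\gamma_k|}\cdot\frac{k^{B^a}_{\eta_k^\alpha}}{\|k^{B^a}_{\eta_k^\alpha}\|}$, and the unimodular factor $\gamma_k/|\gamma_k|$ disappears in the rank-one projection $x\otimes x$. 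Summing over $k$ gives $U_aD_\nu U_a^*=\sum_{k=1}^N\nu_k\left(\frac{k^{B^a}_{\eta_k^\alpha}}{\|k^{B^a}_{\eta_k^\alpha}\|}\otimes\frac{k^{B^a}_{\eta_k^\alpha}}{\|k^{B^a}_{\eta_k^\alpha}\|}\right)$, which by the first paragraph (the $\eta_k^\alpha$ being the Clark points of $B^a$) is exactly $D_{\nu^a}$.

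\emph{Part (iii).} I would apply (ii) to $\nu=\mu^B_\alpha$, i.e.\ $\nu_k=1/|B'(\zeta_k^\alpha)|$, getting $U_a\Delta^\alpha_B U_a^*=D_{\nu^a}$ with $\nu^a=\sum_{k=1}^N\frac{1}{|B'(\zeta_k^\alpha)|}\delta_{\eta_k^\alpha}$. Differentiating $B^a=B\circ b_a$ and using $b_a(\eta_k^\alpha)=\zeta_k^\alpha$ gives $(B^a)'(\eta_k^\alpha)=B'(\zeta_k^\alpha)\,b'_a(\eta_k^\alpha)$, whence $\frac{1}{|B'(\zeta_k^\alpha)|}=\frac{|b'_a(\eta_k^\alpha)|}{|(B^a)'(\eta_k^\alpha)|}$; comparing with~\eqref{eq:definition of Clark measure} written for $B^a$, this is precisely the assertion $\nu^a=|b'_a|\,\mu^{B^a}_\alpha$. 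Finally, differentiating $b_{-a}\circ b_a=\mathrm{id}$ gives $b'_a=(b'_{-a}\circ b_a)^{-1}$, hence $|b'_a|=|b'_{-a}\circ b_a|^{-1}$, which is the remaining form of $\tilde{\nu}_\alpha^a$. The only genuinely delicate point in the whole argument is the bookkeeping in (i) — the computations with $b_a$ and $b_{-a}$, and above all pinning the branch of the square root so that $\gamma_k$ equals $\sqrt{b'_{-a}(\zeta_k^\alpha)}$ on the nose; once (i) is in place, (ii) and (iii) are routine, the one thing to notice being that the phase $\gamma_k/|\gamma_k|$ is invisible to the rank-one projection.
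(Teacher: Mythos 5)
Your proof is correct and follows essentially the same route as the paper's: establish (i) first, deduce (ii) by conjugating the normalized rank-one projections, and obtain (iii) from (ii) via the chain rule applied to $B=B^a\circ b_{-a}$. The only variation is that you verify (i) by direct substitution into~\eqref{eq:definition of U_a} rather than through the reproducing-kernel identity $\langle g, U_a k^B_{\zeta_k^\alpha}\rangle=(U_a^*g)(\zeta_k^\alpha)$ used in the paper (and mentioned parenthetically by you); your observation that the scalar in (i) is only determined up to a phase, which is invisible in (ii) and (iii), is accurate and, if anything, slightly more careful than the paper on that point.
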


\begin{proof}
	For $g\in K_{B^a}$ we have
	\[
	\< g, U_a(k^B_{\zeta_k^\alpha})\>
	=\< U_a^*g, k^B_{\zeta_k^\alpha}\>
	=( U_a^*g ) ( \zeta_k^\alpha)
	= \sqrt{b'_{-a}(\zeta_k^\alpha)} g(\eta_k^\alpha),
	\]
	which proves (i).
	
	To prove (ii), we use (i) to check the action of the left hand side operator on the reproducing kernels. Thus
	\[
	U_aD_\nu U_a^*(k^{B^a}_{\eta^\alpha_k})
	= \sqrt{b'_{a}(\eta_k^\alpha)} U_a D_\nu (k^B_{\zeta_k^\alpha})
	=\nu_k  \sqrt{b'_{a}(\eta_k^\alpha)} \sqrt{b'_{-a}(\zeta_k^\alpha)} k^{B^a}_{\eta^\alpha_k}
	=\nu_k k^{B^a}_{\eta^\alpha_k}.
	\]
	
	(iii) is a consequence of (ii): if $\nu=\mu^B_\alpha$, then
	\[
	\begin{split}
	\nu^a&=\sum_{k=1}^N \frac{1}{|B'(\zeta_k^\alpha)|} \delta_{\eta_k^\alpha}
	=\sum_{k=1}^N \frac{1}{|(B^a)'(\eta_k^\alpha)| \cdot  | b'_{-a}(\zeta_k^\alpha) }  \delta_{\eta_k^\alpha}\\
	&=\sum_{k=1}^N \frac{1}{|(B^a)'(\eta_k^\alpha)|| \cdot  | (b'_{-a}\circ b_a(\eta_k^\alpha) |}  \delta_{\eta_k^\alpha}
	=\frac{1}{|b'_{-a}\circ b_a|} \mu^{B^a}_\alpha=|b'_{a}|\mu^{B^a}_\alpha.\qedhere
	\end{split}
		\]
\end{proof}

\section{Sedlock algebras}\label{se:circulants}
Sedlock algebras have been introduced in~\cite{Se}, where it was shown that they are the only algebras contained in $\TT_B$.  The family of  Sedlock algebras is indexed by a parameter $\alpha\in\bbC\cup\{ \infty \}$. We will be interested only in $\alpha\in\bbT$, in which case the Sedlock algebra  $\BB^B_\alpha$ is  defined to be the commutant of the unitary operator $S^\alpha_B$.

In ~\cite[Proposition 3.2]{Se}  Sedlock characterizes the algebra $\BB^B_\alpha$ as the set of truncated Toeplitz operators with symbols of the
form
$$ \phi + \alpha\overline{S_B ( B \overline{z\phi}} )= \phi + \alpha\overline{P_B (B{\overline{\phi}}})   , $$
where $ \phi \in K_B$.
If $ B(0) = 0 $ we
have $P_B (B \overline{\phi}) = B(\overline{\phi - \phi(0))}$, and thus Sedlock's result yields the following Lemma.

\begin{lemma}\label{le:sedlock1}
	Suppose $B(0)=0$. A bounded operator $A$ is in $\BB^B_\alpha$ if and only if $A=T_B[\phi+\alpha\bar B (\phi-\phi(0))]$ for some $\phi\in K_B$. 
	\end{lemma}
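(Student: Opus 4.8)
The plan is to derive Lemma~\ref{le:sedlock1} directly from the characterization of $\BB^B_\alpha$ quoted from~\cite[Proposition 3.2]{Se}, namely that $A\in\BB^B_\alpha$ if and only if $A=T_B[\phi+\alpha\overline{P_B(B\overline\phi)}]$ for some $\phi\in K_B$. Since this characterization is taken as given, the only thing left to do is to simplify the symbol $\phi+\alpha\overline{P_B(B\overline\phi)}$ under the extra hypothesis $B(0)=0$, and to check that the resulting class of symbols is still parametrized by all of $K_B$.

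First I would compute $P_B(B\overline\phi)$ for $\phi\in K_B$. Write $\phi=\phi(0)+(\phi-\phi(0))$, so $\phi-\phi(0)\in K_B$ and, since $B(0)=0$, in fact $\phi-\phi(0)\in zK_{\hat B}$ (the orthogonal complement of the constants in $K_B$, as recalled in Subsection~\ref{sse:TTO basics}). The key identity is that $B\overline{(\phi-\phi(0))}$ already lies in $\overline{K_B}$: indeed, using that $h\mapsto \bar z\bar h B$ is an involution on $K_B$ — or more directly, the involution $h\mapsto \bar z\bar h\hat B$ on $K_{\hat B}$ — one gets that $B\overline{(\phi-\phi(0))}=z\hat B\,\overline{z\,\overline{\,\cdot\,}}$ rearranges to an element of $\overline{K_B}$, hence is fixed by $P_B$ only after conjugation; concretely $\overline{B\overline{(\phi-\phi(0))}}=\bar B(\phi-\phi(0))$ must be checked to sit in $K_B$. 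Meanwhile $B\overline{\phi(0)}=\overline{\phi(0)}\,B\in BH^2$, so $P_B(B\overline{\phi(0)})=0$. Therefore $P_B(B\overline\phi)=B\overline{(\phi-\phi(0))}$ as claimed in the text preceding the Lemma, and the Sedlock symbol becomes $\phi+\alpha\,\overline{B\overline{(\phi-\phi(0))}}=\phi+\alpha\bar B(\phi-\phi(0))$. This gives one inclusion: every $A\in\BB^B_\alpha$ has the stated form.

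For the converse, I would simply run the same computation backwards: given $\phi\in K_B$, the symbol $\phi+\alpha\bar B(\phi-\phi(0))$ equals $\phi+\alpha\overline{P_B(B\overline\phi)}$ by the identity just established, which by Sedlock's proposition is exactly the general form of a symbol of an operator in $\BB^B_\alpha$. Hence $T_B[\phi+\alpha\bar B(\phi-\phi(0))]\in\BB^B_\alpha$, and since $\phi$ ranges over all of $K_B$ in both directions the two descriptions coincide. One should also note in passing that boundedness is automatic here: $\BB^B_\alpha$ is by definition a set of bounded operators (the commutant of $S_B^\alpha$), so the symbol $\phi+\alpha\bar B(\phi-\phi(0))$ automatically induces a bounded truncated Toeplitz operator, and conversely any bounded $A$ of the stated form lies in $\BB^B_\alpha$ by the reverse computation.

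The only real obstacle is the bookkeeping in the step $P_B(B\overline\phi)=B\overline{(\phi-\phi(0))}$: one must verify both that $B\overline{(\phi-\phi(0))}\in K_B$ (equivalently $\bar B(\phi-\phi(0))\in\overline{K_B}$) and that the discarded piece $\overline{\phi(0)}B$ is killed by $P_B$. The second is immediate; the first is the involution argument on $K_B$ (or on $zK_{\hat B}$) already invoked in the preliminaries, so there is nothing genuinely hard — the Lemma is essentially a restatement of Sedlock's proposition specialized to $B(0)=0$. I would therefore keep the proof short, citing Subsection~\ref{sse:TTO basics} for the involution and the orthogonal decomposition of $K_B$, and merely record the one-line simplification $P_B(B\overline\phi)=B\overline{(\phi-\phi(0))}$.
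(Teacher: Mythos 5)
Your proof follows exactly the paper's route: the paper likewise derives the Lemma in one line from Sedlock's Proposition~3.2 together with the identity $P_B(B\overline{\phi})=B\overline{(\phi-\phi(0))}$, which you verify correctly via the decomposition $\phi=\phi(0)+(\phi-\phi(0))$, the vanishing of $P_B(\overline{\phi(0)}B)$, and the involution on $K_{\hat B}$. One small slip to fix: in your middle paragraph you assert that $B\overline{(\phi-\phi(0))}$ lies in $\overline{K_B}$ and is ``fixed by $P_B$ only after conjugation,'' whereas the correct (and needed) statement --- which you do give in your final paragraph --- is that $B\overline{(\phi-\phi(0))}\in K_B$ itself: writing $\phi-\phi(0)=zh$ with $h\in K_{\hat B}$ gives $B\overline{(\phi-\phi(0))}=\hat B\bar h=z\bigl(\bar z\bar h\hat B\bigr)\in zK_{\hat B}\subset K_B$, so $P_B$ fixes it outright.
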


Using Theorem~\ref{th:clarkpoltoratski}, one can connect the two descriptions of the Sedlock algebras, as shown by the next theorem.

 \begin{theorem}\label{th:circulants as functions generalized} Let $B$ be an arbitrary inner function satisfying $B(0)=0$. Suppose $T=T_B[\phi+\alpha\bar B (\phi-\phi(0))]\in \BB_\alpha^B$. Then the function $\phi$ has radial limits almost everywhere with respect to $\mu_\alpha$. If we denote the limit function by $\phi^*$, then $\phi^*\in L^\infty(\mu_\alpha)$, and $T=\phi^*(S_B^\alpha)$.
 \end{theorem}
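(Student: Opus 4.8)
The heart of the matter is the Clark-model identification from Theorem~\ref{th:clarkpoltoratski}: the unitary $V\colon K_B\to L^2(\mu_\alpha)$ sending $f$ to its radial-limit function $f^*$ intertwines $S_B^\alpha$ with $M_z$ on $L^2(\mu_\alpha)$. Consequently $V$ intertwines any polynomial $p(S_B^\alpha)$ with $M_{p}$ on $L^2(\mu_\alpha)$, and more generally $V$ intertwines the von Neumann algebra generated by $S_B^\alpha$ (the commutant of which, up to the usual bicommutant bookkeeping, is the Sedlock algebra $\BB_\alpha^B$) with the multiplication algebra $\{M_g : g\in L^\infty(\mu_\alpha)\}$ on $L^2(\mu_\alpha)$. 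So the strategy is: first show that the given $T=T_B[\phi+\alpha\bar B(\phi-\phi(0))]\in\BB_\alpha^B$ corresponds, via $V$, to multiplication by some $g\in L^\infty(\mu_\alpha)$; then identify that $g$ with $\phi^*$, the $\mu_\alpha$-radial limit of $\phi$; and conclude $T=\phi^*(S_B^\alpha)$, where $\phi^*(S_B^\alpha)$ is understood through the functional calculus $V^*M_{\phi^*}V$.

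\textbf{Step 1: $T$ is a multiplication operator in the Clark picture.} By Lemma~\ref{le:sedlock1}, $\BB_\alpha^B$ is exactly the commutant of $S_B^\alpha$. Since $S_B^\alpha$ is unitary with $VS_B^\alpha V^*=M_z$, we get $V(\BB_\alpha^B)V^* = \{M_z\}' $ on $L^2(\mu_\alpha)$. The commutant of $M_z$ on $L^2$ of a (scalar-valued) measure is well known to be the algebra of all multiplication operators $M_g$, $g\in L^\infty(\mu_\alpha)$ — this is the standard maximal-abelian-subalgebra fact. Hence there is a unique $g\in L^\infty(\mu_\alpha)$ with $VTV^*=M_g$, and $\|g\|_{L^\infty(\mu_\alpha)}=\|T\|$.

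\textbf{Step 2: identifying $g$ with $\phi^*$.} The natural candidate is $g=V(T\mathbf 1)$ applied to the cyclic vector: indeed $\mathbf 1\in K_B$ (as $B(0)=0$), and if $VTV^*=M_g$ then $g = g\cdot V\mathbf 1 = VT\mathbf 1 = (T\mathbf 1)^*$, the radial limit of $T\mathbf 1$. Now compute $T\mathbf 1$ from the symbol: $T\mathbf 1 = P_B(\phi\cdot 1) + \alpha P_B(\bar B(\phi-\phi(0))\cdot 1)$. Since $\phi\in K_B$, $P_B\phi=\phi$; and $\bar B(\phi-\phi(0))\in\overline{BH^2}$ (because $\phi-\phi(0)\in zH^2\subset H^2$ and we are multiplying by $\bar B$... wait), so one must check $P_B(\bar B(\phi-\phi(0)))$. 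Using $\bar B\,\overline{H^2}\supset$ ... more carefully: $\overline{B H^2_0}\perp K_B$, and $\bar B(\phi - \phi(0))$, being the conjugate of $B\overline{(\phi-\phi(0))}$ with $\overline{(\phi-\phi(0))}\in\overline{zH^2}$, lies in $\overline{zBH^2}\subset\overline{BH^2}$, hence $P_B$ kills it. Therefore $T\mathbf 1=\phi$, so $g=\phi^*$. In particular this argument, combined with Step 1, shows \emph{a posteriori} that $\phi$ has radial limits $\mu_\alpha$-a.e.\ and $\phi^*\in L^\infty(\mu_\alpha)$ — which is the first assertion of the theorem.

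\textbf{Step 3: conclude $T=\phi^*(S_B^\alpha)$.} We have $VTV^* = M_{\phi^*}$ and, by definition of the functional calculus for the unitary $S_B^\alpha$ (via its Clark model, Theorem~\ref{th:clarkpoltoratski}), $\phi^*(S_B^\alpha) = V^*M_{\phi^*}V$. Hence $T=\phi^*(S_B^\alpha)$.

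\textbf{Main obstacle.} The delicate point is Step 2: verifying cleanly that $T\mathbf 1=\phi$, i.e.\ that the antianalytic part $\alpha\bar B(\phi-\phi(0))$ of the standard symbol contributes nothing when applied to the constant function and projected back into $K_B$. This is a short computation using the decompositions $L^2 = \overline{B H^2_0}\oplus K_B\oplus BH^2$ and the identity $\overline{K_{\hat B}}$-type membership, but it must be done carefully with the $\phi(0)$ correction term — which is precisely the term that makes the symbol \emph{standard} and makes $T\mathbf 1$ land exactly on $\phi$ rather than $\phi-\phi(0)$ plus a correction. A secondary, more structural subtlety is that in the infinite-dimensional (non-finite-Blaschke) setting one should make sure Theorem~\ref{th:clarkpoltoratski} is being invoked correctly: $V$ is unitary onto $L^2(\mu_\alpha)$ and $\mathbf 1$ is a genuine cyclic vector for $M_z$ there, so that $\phi^*$ is well-defined $\mu_\alpha$-a.e.\ as an $L^\infty(\mu_\alpha)$ function even though $\phi$ need not have radial limits $m$-a.e.\ on all of $\bbT$ in any stronger sense. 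Once these two points are handled, the rest is the abelian von Neumann algebra boilerplate of Steps 1 and 3.
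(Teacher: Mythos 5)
Your proposal follows the paper's proof essentially step for step: the radial limits exist because $\phi\in K_B$ (Theorem~\ref{th:clarkpoltoratski}), conjugation by the Clark unitary $V$ carries $\BB^B_\alpha=\{S^\alpha_B\}'$ into $\{M_z\}'=\{M_g:g\in L^\infty(\mu_\alpha)\}$, and the multiplier is identified by evaluating at the cyclic vector via $\phi=T\mathbf 1$. The only place you diverge is that the paper simply cites Sedlock's Proposition~3.2 for the identity $T\mathbf 1=\phi$, whereas you try to prove it directly; your conclusion there is right, but the justification as written is not. The function $\bar B(\phi-\phi(0))$ does \emph{not} lie in $\overline{zBH^2}$, and $\overline{BH^2_0}\oplus K_B\oplus BH^2$ is not all of $L^2$: it misses $\overline{zK_B}$, which is exactly where $\bar B(\phi-\phi(0))$ lives, since $\bar Bh=\overline{z\tilde h}$ with $\tilde h=\bar z\bar h B\in K_B$ whenever $h\in K_B$. (Membership in $\overline{BH^2}$ would in any case not force $P_B$ to annihilate the function: $1=\bar B\cdot B\in\overline{BH^2}$ yet $1\in K_B$.) The correct one-line argument is that for $h,g\in K_B$ one has $\langle\bar Bh,g\rangle=\langle h,Bg\rangle=0$ because $K_B\perp BH^2$; applying this with $h=\phi-\phi(0)\in K_B$ gives $P_B\bigl(\bar B(\phi-\phi(0))\bigr)=0$ and hence $T\mathbf 1=\phi$, after which your Steps 1 and 3 coincide with the paper's argument.
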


 \begin{proof} The first part of the statement follows from Theorem~\ref{th:clarkpoltoratski}, since $\phi\in K_B$.
 	The operator $V:K_B\to L^2(\mu_\alpha)$ defined by $Vf=f^*$ is unitary, and $VS_B^\alpha=M_z V$. Since $\BB^B_\alpha=\{ S^\alpha_B \}'$, we have $VTV^*\in \{M_z\}'$, and thus $VTV^*=\Phi(M_z)=M_{\Phi}$ for some $\Phi\in L^\infty(\mu_\alpha)$.
 	
 	On the other hand, it follows from Proposition 3.2 of~\cite{Se}  that, when we write an operator $T\in \BB_\alpha$ as $T=T_B[\phi+\alpha\bar B (\phi-\phi(0))]$, we can identify $\phi$ as  $\phi=T\mathbf{1}$. So
 	\[
 	\phi^*=V\phi=VT\1=M_\Phi V\1=M_\Phi\1=\Phi.
 	\]
 	Thus
 	\[
 	T=V^*\phi^*(M_z)V=\phi^*(S^\alpha_B),
 	\]
 	which ends the proof of the theorem.
 \end{proof}

Some consequences for operators in the Sedlock class $\BB^B_\alpha$ can be immediately deduced.

\begin{corollary}\label{co:consequences of circ as functions}
	With the above notations, if we decompose $\mu_\alpha=\mu_\alpha^c+\mu_\alpha^a$ in its continuous and atomic parts, and the support of $\mu_\alpha^a$ is the sequence $(\eta_n)_{n}$, then:

\begin{itemize}
	\item[(i)] $T_B[\phi+\alpha\bar B (\phi-\phi(0))]$ is compact if and only if $\phi^*=0$ $\mu_\alpha^c$-almost everywhere, while $\phi^*(\eta_j)\to 0$.
	
	\item[(ii)] If $p$ is a positive real number, $T_B[\phi+\alpha\bar B (\phi-\phi(0))]$ is in the class $\CC_p$  if and only if $\phi^*=0$ $\mu_\alpha^c$-almost everywhere and $\sum_n |\phi^*(\eta_n)|^p<\infty$.
	
	\item[(iii)] If $p\ge 1$ is an integer and  $T=T_B[\phi+\bar B (\phi-\phi(0))]\in \CC_p$, then
	\[
	\Tr T^p=\sum_n  \phi^*(\eta_n)^p.
	\]
	
\end{itemize}
\end{corollary}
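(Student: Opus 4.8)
The plan is to invoke Theorem~\ref{th:circulants as functions generalized} to reduce everything to a statement about the multiplication operator $M_{\phi^*}$ on $L^2(\mu_\alpha)$, and then to use the spectral decomposition of $\mu_\alpha$ into its continuous and atomic parts. By the theorem, $T=T_B[\phi+\alpha\bar B(\phi-\phi(0))]$ is unitarily equivalent (via $V$) to $M_{\phi^*}$ acting on $L^2(\mu_\alpha)=L^2(\mu_\alpha^c)\oplus L^2(\mu_\alpha^a)$. Since this decomposition reduces $M_{\phi^*}$, the operator splits as $M_{\phi^*}|_{L^2(\mu_\alpha^c)}\oplus M_{\phi^*}|_{L^2(\mu_\alpha^a)}$, and the second summand is diagonal with respect to the orthonormal basis $\{\mu_\alpha^a(\{\eta_n\})^{-1/2}\mathbf{1}_{\{\eta_n\}}\}_n$, with diagonal entries exactly $\phi^*(\eta_n)$.

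For part (i), I would argue that a multiplication operator $M_{\phi^*}$ on an infinite-dimensional $L^2(\mu_\alpha^c)$ is compact only if $\phi^*=0$ $\mu_\alpha^c$-a.e.: otherwise the set $\{|\phi^*|\ge\varepsilon\}$ has positive $\mu_\alpha^c$-measure for some $\varepsilon>0$, and since $\mu_\alpha^c$ is continuous this set carries infinitely many disjoint subsets of positive measure, yielding an infinite orthonormal sequence on which $M_{\phi^*}$ is bounded below by $\varepsilon$. On the atomic part, $M_{\phi^*}|_{L^2(\mu_\alpha^a)}$ is the diagonal operator $\mathrm{diag}(\phi^*(\eta_n))$, which is compact if and only if $\phi^*(\eta_n)\to0$. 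For part (ii), the same splitting shows that the singular values of $T$ are $\{|\phi^*(\eta_n)|\}_n$ together with the ``singular values'' of $M_{\phi^*}|_{L^2(\mu_\alpha^c)}$; membership in $\CC_p$ forces the continuous part to vanish (by the same separation argument, a non-vanishing multiplier on a continuous measure space produces an infinite orthonormal system of approximate singular vectors with singular values bounded below, so the $p$-th power series diverges) and forces $\sum_n|\phi^*(\eta_n)|^p<\infty$. Conversely, if $\phi^*=0$ $\mu_\alpha^c$-a.e.\ then $T$ is unitarily equivalent to the diagonal operator with entries $\phi^*(\eta_n)$, which is in $\CC_p$ exactly when that series converges.

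For part (iii), once $T\in\CC_p$ we know from (ii) that $\phi^*=0$ $\mu_\alpha^c$-a.e., so $T$ is unitarily equivalent to $D:=\mathrm{diag}(\phi^*(\eta_n))$ on $\ell^2$. Then $T^p$ is unitarily equivalent to $D^p=\mathrm{diag}(\phi^*(\eta_n)^p)$, and since $p\ge1$ and $\sum_n|\phi^*(\eta_n)|^p<\infty$ the operator $D^p$ is trace class with $\Tr D^p=\sum_n\phi^*(\eta_n)^p$; trace is a unitary invariant, so $\Tr T^p=\sum_n\phi^*(\eta_n)^p$. (Note the absence of $\alpha$ in the symbol in the statement of (iii) is presumably a typo for $T=T_B[\phi+\alpha\bar B(\phi-\phi(0))]$; the argument is unaffected.)

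The only genuine subtlety, and the step I would be most careful about, is the ``continuous part must vanish'' direction in (i) and (ii): one must verify that a continuous (nonatomic) positive finite measure always admits, on any set of positive measure, a countably infinite family of pairwise disjoint subsets of positive measure — this is a standard consequence of nonatomicity (via Sierpiński's theorem on the range of a nonatomic measure, or a direct bisection argument) — and then that the normalized indicator functions of these sets form an orthonormal sequence on which $M_{\phi^*}$, restricted appropriately, is bounded below. Everything else is bookkeeping with the unitary equivalence $V$ and the elementary spectral theory of diagonal operators.
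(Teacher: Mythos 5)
Your proposal is correct and follows exactly the route the paper intends: the paper states this corollary without proof as an immediate consequence of Theorem~\ref{th:circulants as functions generalized}, namely transporting $T$ via the Clark unitary $V$ to the multiplication operator $M_{\phi^*}$ on $L^2(\mu_\alpha)$ and splitting along $L^2(\mu_\alpha^c)\oplus L^2(\mu_\alpha^a)$. Your careful treatment of the nonatomic part (disjoint sets of positive measure forcing non-compactness unless $\phi^*$ vanishes $\mu_\alpha^c$-a.e.) and your remark about the missing $\alpha$ in (iii) are both sound; nothing further is needed.
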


\section{Sedlock algebras and finite Blaschke products}\label{se:sedlock and finite blaschke}

Let us suppose now that $B$  is a finite Blaschke product such that $B(0)=0$. Then, for $\alpha\in\bbT$, the measure $\mu_\alpha^B$ is given by~\eqref{eq:definition of Clark measure}.   As noted above, for each $k$, $k^B_{\zeta_k^\alpha}$ is an eigenvector of $S_B^\alpha$ associated to the eigenvalue $\zeta_k^\alpha$. Therefore,  if $\phi\in K_B$,  Theorem~\ref{th:circulants as functions generalized} implies that
\begin{equation}\label{eq:decomposition of circulant}
T_B[\phi+\alpha\bar B (\phi-\phi(0))]=\sum_{k=1}^N \phi(\zeta_k^\alpha)\left(
\frac{k^B_{\zeta_k^\alpha}}{\|k^B_{\zeta_k^\alpha}\|} \otimes
\frac{k^B_{\zeta_k^\alpha}}{\|k^B_{\zeta_k^\alpha}\|}  \right),
\end{equation}

 If $T=T_B[\phi+\bar B (\phi-\phi(0))]$ and $\nu=\sum_{k=1}^N \nu_k \delta_{\zeta_k^\alpha}$,  it follows from~\eqref{eq:decomposition of circulant} and~\eqref{eq:definition of D_nu} that
\begin{equation}\label{eq:trace for general nu}
\Tr(D_\nu T^p)=\sum_{k=1}^N  \nu_k \phi^p(\zeta_k^\alpha)= \int_{\Bbb{T}} \phi^p\, d\nu. 
\end{equation}

In particular,
\begin{equation}\label{eq:trace for Clark measure in K_B}
\Tr(\Delta^\alpha_B T^p)= \int_{\Bbb{T}} \phi^p\, d\mu^B_\alpha
\end{equation}
and therefore, using also~\eqref{eq:definition of mualpha},
\begin{equation}\label{eq:trace norm for clark measure}
\|\Delta^\alpha_B\|_1=\Tr(\Delta^\alpha_B)= \int_{\Bbb{T}} d\mu^B_\alpha=1.
\end{equation}

We may integrate with respect to $\alpha$ to obtain some interesting consequences.

\begin{lemma}\label{le:integration of Delta wit respect to alpha}
	With the above notations, we have
	\[
	\int_\bbT \Delta^\alpha_{B} dm(\alpha)=  T_{B}\left[\frac{1}{|B'|}\right].
	\]
\end{lemma}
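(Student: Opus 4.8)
The plan is to compute the Clark-measure integral $\int_\bbT \Delta_B^\alpha \, dm(\alpha)$ by testing it against reproducing kernels and invoking the Alexandrov disintegration formula~\eqref{eq:aleksandrov}. By~\eqref{eq:decomposition of circulant} with $\phi\equiv 1$ (so $\phi-\phi(0)=0$) we have $\Delta_B^\alpha = T_B[1]$? — no: the cleaner route is to use~\eqref{eq:trace for Clark measure in K_B}, or rather to pair $\Delta_B^\alpha$ with the rank-one operators $k_\lambda^B\otimes k_\mu^B$ and identify the resulting function of $\lambda,\mu$ on both sides. Since truncated Toeplitz operators on a finite-dimensional $K_B$ are determined by their ``values'' $\<A k_\mu^B, k_\lambda^B\>$ for $\lambda,\mu\in\bbD$, it suffices to show
\[
\int_\bbT \<\Delta_B^\alpha k_\mu^B, k_\lambda^B\> \, dm(\alpha) = \<T_B[1/|B'|] k_\mu^B, k_\lambda^B\> \qquad \text{for all } \lambda,\mu\in\bbD.
\]

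First I would expand the left-hand integrand using~\eqref{eq:definition of Delta}: since $k_{\zeta_k^\alpha}^B/\|k_{\zeta_k^\alpha}^B\|$ is a unit eigenvector of $S_B^\alpha$,
\[
\<\Delta_B^\alpha k_\mu^B, k_\lambda^B\> = \sum_{k=1}^N \frac{1}{|B'(\zeta_k^\alpha)|}\,\frac{\overline{k_{\zeta_k^\alpha}^B(\mu)}}{\|k_{\zeta_k^\alpha}^B\|}\cdot\frac{k_{\zeta_k^\alpha}^B(\lambda)}{\|k_{\zeta_k^\alpha}^B\|},
\]
and using $\|k_{\zeta_k^\alpha}^B\|^2 = |B'(\zeta_k^\alpha)|$ from~\eqref{eq:formula for norm of k_zeta} together with the description~\eqref{eq:definition of Clark measure} of $\mu_\alpha^B$, this is exactly $\int_\bbT k_\zeta^B(\lambda)\,\overline{k_\zeta^B(\mu)}\, d\mu_\alpha^B(\zeta)$. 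Next, by Alexandrov's formula~\eqref{eq:aleksandrov} applied to the continuous function $\zeta\mapsto k_\zeta^B(\lambda)\overline{k_\zeta^B(\mu)}$ (continuous on $\bbT$ since $B$ is a finite Blaschke product, hence $k_\zeta^B$ is given by the second formula in the reproducing-kernel display and depends continuously on $\zeta\in\bbT$), integrating in $\alpha$ yields $\int_\bbT k_\zeta^B(\lambda)\overline{k_\zeta^B(\mu)}\, dm(\zeta)$. Finally I would recognize $\int_\bbT k_\zeta^B(\lambda)\overline{k_\zeta^B(\mu)}\, dm(\zeta)$ as $\<T_B[1/|B'|]k_\mu^B, k_\lambda^B\>$: writing $k_\zeta^B(\lambda) = \overline{k_\lambda^B(\zeta)}$ for $\zeta\in\bbT$ (the kernel being conjugate-symmetric), the integral becomes $\int_\bbT \overline{k_\lambda^B(\zeta)} k_\mu^B(\zeta)\,|B'(\zeta)|^{-1}\cdot |B'(\zeta)|\, dm(\zeta)$ — one has to insert the density correctly — so that the weight $1/|B'|$ appears precisely because $d\mu_\alpha^B$ carries mass $1/|B'(\zeta_k^\alpha)|$ at each atom while the ``uniformization'' over $\alpha$ spreads these to Lebesgue measure.

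The main obstacle is the bookkeeping in this last identification: one must be careful about where the factor $|B'(\zeta)|$ lands. The honest computation is that $\int_\bbT \<\Delta_B^\alpha f, g\>\,dm(\alpha) = \int_\bbT \bigl(\int_\bbT f\bar g\, d\mu_\alpha^B\bigr) dm(\alpha) = \int_\bbT f\bar g\, dm$ by Alexandrov applied to $f\bar g$ (for $f,g\in K_B$, continuous on $\bbT$), which says $\int_\bbT \Delta_B^\alpha\, dm(\alpha)$ acts as $P_B M_{1}|K_B$ composed with... no. The correct reading: for $f,g\in K_B$, $\<T_B[1/|B'|]f,g\> = \int_\bbT \frac{1}{|B'|} f\bar g\, dm$ while $\int_\bbT\<\Delta_B^\alpha f,g\>dm(\alpha)=\int_\bbT f\bar g\, dm$, so these agree only after noting $f\bar g$, $f,g\in K_B$, ranges over a set on which testing against $dm$ versus $\frac{1}{|B'|}dm$... — which forces $f\bar g$ to be replaced: in fact one should take $f = k_\mu^B$, $g=k_\lambda^B$ and use that $\<T_B[1/|B'|]k_\mu^B,k_\lambda^B\> = \int_\bbT \frac{1}{|B'(\zeta)|} k_\mu^B(\zeta)\overline{k_\lambda^B(\zeta)}\,dm(\zeta)$, and that the Clark measures satisfy $\sum_k \frac{1}{|B'(\zeta_k^\alpha)|}\delta_{\zeta_k^\alpha}$ with $B(\zeta_k^\alpha)=\alpha$; changing variables $\alpha = B(\zeta)$ in $\int_\bbT(\cdots)dm(\alpha)$ via the co-area/substitution $dm(\alpha) \leftrightarrow |B'(\zeta)|\,dm(\zeta)$ along each sheet exactly cancels the $1/|B'|$ weights and reproduces $\int_\bbT k_\mu^B \overline{k_\lambda^B}\,\frac{1}{|B'|}\,|B'|\,dm = \int_\bbT k_\mu^B\overline{k_\lambda^B}\,dm$ on one side, and with the weight retained on the other. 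I would organize the write-up so that this change of variables (equivalently, a direct application of~\eqref{eq:aleksandrov}) is the single computational step, and verify the endpoints match by evaluating both operators' quadratic forms on the spanning family of reproducing kernels, which determines an operator on the finite-dimensional space $K_B$ uniquely.
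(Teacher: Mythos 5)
Your overall plan --- identify both sides by testing quadratic forms on reproducing kernels, rewrite $\<\Delta_B^\alpha k^B_\mu,k^B_\lambda\>$ as an integral against the Clark measure, and then apply Alexandrov's formula~\eqref{eq:aleksandrov} --- is exactly the paper's strategy (the paper applies $\Delta^\alpha_B$ to a general $f\in K_B$ expanded in the eigenbasis rather than pairing two kernels, a cosmetic difference). But the one computational identity the whole argument rests on is stated incorrectly, in two mutually inconsistent ways, and you explicitly leave the discrepancy unresolved. The correct identity is
\[
\<\Delta_B^\alpha k^B_\mu,k^B_\lambda\>
=\sum_{k=1}^N\frac{1}{|B'(\zeta_k^\alpha)|}\cdot\frac{k^B_{\zeta_k^\alpha}(\lambda)\,\overline{k^B_{\zeta_k^\alpha}(\mu)}}{\|k^B_{\zeta_k^\alpha}\|^2}
=\int_\bbT\frac{k^B_\zeta(\lambda)\,\overline{k^B_\zeta(\mu)}}{|B'(\zeta)|}\,d\mu^B_\alpha(\zeta):
\]
the factor $1/|B'(\zeta)|$ appears \emph{squared} in each term of the sum, once as the Clark mass $\mu^B_\alpha(\{\zeta_k^\alpha\})$ (which gets absorbed into $d\mu^B_\alpha$) and once as $1/\|k^B_{\zeta_k^\alpha}\|^2$ from the normalization of the eigenvectors (which must stay in the integrand). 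Your first paragraph drops the second factor and asserts the sum equals $\int k^B_\zeta(\lambda)\overline{k^B_\zeta(\mu)}\,d\mu^B_\alpha(\zeta)$; pushing that through Alexandrov yields $\int k^B_\zeta(\lambda)\overline{k^B_\zeta(\mu)}\,dm(\zeta)=\<k^B_\mu,k^B_\lambda\>$, i.e.\ the false conclusion $\int_\bbT\Delta^\alpha_B\,dm(\alpha)=I$. Your second paragraph replaces this with another wrong identity, $\<\Delta^\alpha_B f,g\>=\int f\bar g\,d\mu^B_\alpha$ (again missing the weight $1/|B'|$; the Clark unitary $V$ of Theorem~\ref{th:clarkpoltoratski} carries $\Delta^\alpha_B$ to multiplication by $1/|B'|$ on $L^2(\mu_\alpha)$, not to the identity), and then appeals to a ``change of variables $\alpha=B(\zeta)$'' to cancel the weights --- but that substitution \emph{is} Alexandrov's formula for a finite Blaschke product, so it cannot be invoked a second time to repair the bookkeeping.

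Once the identity is written correctly the proof closes in one line with no further manoeuvring: Alexandrov applied to the continuous function $\zeta\mapsto k^B_\zeta(\lambda)\overline{k^B_\zeta(\mu)}/|B'(\zeta)|$ gives
\[
\int_\bbT\<\Delta_B^\alpha k^B_\mu,k^B_\lambda\>\,dm(\alpha)
=\int_\bbT\frac{k^B_\zeta(\lambda)\overline{k^B_\zeta(\mu)}}{|B'(\zeta)|}\,dm(\zeta)
=\int_\bbT\frac{1}{|B'(\zeta)|}\,k^B_\mu(\zeta)\overline{k^B_\lambda(\zeta)}\,dm(\zeta)
=\left\< T_B\!\left[\frac{1}{|B'|}\right]k^B_\mu,k^B_\lambda\right\>,
\]
using the boundary conjugate symmetry $k^B_\zeta(\lambda)=\overline{k^B_\lambda(\zeta)}$. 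So the gap is not in the architecture of your argument but in the single identity that carries it; as submitted, the proposal proves either the wrong statement or nothing, depending on which of your two versions of the identity one follows.
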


\begin{proof}
	For $f\in K_{B}$, we have $f=\sum_{k=1}^N \left\<f ,
	\frac{k^B_{\zeta_k^\alpha}}{\|k^B_{\zeta_k^\alpha}\|} \right\>
	\frac{k^B_{\zeta_k^\alpha}}{\|k^B_{\zeta_k^\alpha}\|}$. By (\ref{eq:aleksandrov}), we get
	\[
	\begin{split}
	\int_\bbT \Delta^\alpha_{B}(f) dm(\alpha)
	&=\int_\bbT \left( \sum_{k=1}^N \frac{1}{|B'(\zeta_k^\alpha)|}
	\left\<f , \frac{k^B_{\zeta_k^\alpha}}{\|k^B_{\zeta_k^\alpha}\|} \right\>
	\frac{k^B_{\zeta_k^\alpha}}{\|k^B_{\zeta_k^\alpha}\|}\right) dm(\alpha)\\
	&=\int_\bbT  \left(  \int_\bbT  \left\<f ,
	\frac{k^B_{\zeta}}{\|k^B_{\zeta}\|} \right\>
	\frac{k^B_{\zeta}}{\|k^B_{\zeta}\|}
	d\mu^{B}_\alpha (\zeta)  \right) dm(\alpha)\\
	&= \int_\bbT \frac{1}{|B'(\zeta)|} f(\zeta) k^B_{\zeta} dm(\zeta)\\
	&=T_{B}\left[\frac{1}{|B'|}\right] (f).\qedhere
	\end{split}
	\]
\end{proof}

From~\eqref{eq:trace for Clark measure in K_B} and Lemma~\ref{le:integration of Delta wit respect to alpha} follows the next corollary.

\begin{corollary}\label{co:trace after integration in K_B}
	With the above notations (including $T=T_B[\phi+\alpha\bar B (\phi-\phi(0))]$), we have
	\[
	\Tr\left( T_{B}\left[\frac{1}{|B'|}\right]T^p\right) =
	 \int \phi^p\, dm.
	\]
\end{corollary}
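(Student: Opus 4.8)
The plan is to combine the trace identity~\eqref{eq:trace for Clark measure in K_B} with the integration formula of Lemma~\ref{le:integration of Delta wit respect to alpha}, using the Alexandrov disintegration~\eqref{eq:aleksandrov} as the bridge. The left-hand side $\Tr\left(T_B\left[\frac{1}{|B'|}\right]T^p\right)$ should first be rewritten by replacing $T_B\left[\frac{1}{|B'|}\right]$ with $\int_\bbT \Delta^\alpha_B\,dm(\alpha)$, which is exactly the content of Lemma~\ref{le:integration of Delta wit respect to alpha}. Since everything is finite-dimensional (here $B$ is a finite Blaschke product), the trace is a finite linear functional and commutes with the integral over $\alpha$, so
\[
\Tr\left(T_B\left[\tfrac{1}{|B'|}\right]T^p\right)=\int_\bbT \Tr\bigl(\Delta^\alpha_B T^p\bigr)\,dm(\alpha).
\]

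Next I would apply~\eqref{eq:trace for Clark measure in K_B}, which gives $\Tr(\Delta^\alpha_B T^p)=\int_\bbT \phi^p\,d\mu^B_\alpha$ for each fixed $\alpha\in\bbT$. Substituting this into the display above yields
\[
\Tr\left(T_B\left[\tfrac{1}{|B'|}\right]T^p\right)=\int_\bbT\left(\int_\bbT \phi^p\,d\mu^B_\alpha\right)dm(\alpha),
\]
and now the inner-then-outer integral is precisely the right-hand side of Alexandrov's identity~\eqref{eq:aleksandrov} applied to the function $f=\phi^p$. Hence the double integral collapses to $\int_\bbT \phi^p\,dm$, which is the claimed formula. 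One should note that $T=T_B[\phi+\bar B(\phi-\phi(0))]$ here corresponds to the Sedlock algebra parameter $\alpha=1$, matching the statement, but $\phi\in K_B$ is a rational function on a finite-dimensional model space, so $\phi^p$ is continuous on $\bbT$ and~\eqref{eq:aleksandrov} applies without any regularity worry.

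The only genuine point requiring care — and the step I would flag as the main obstacle — is the justification of interchanging the trace with the integral over $\alpha$, i.e.\ that $\alpha\mapsto \Delta^\alpha_B$ is integrable in a sense compatible with taking traces. In the finite-dimensional setting this is harmless: $\Delta^\alpha_B$ is a matrix-valued function of $\alpha$ whose entries (in any fixed basis of $K_B$, say the monomial basis) depend measurably and boundedly on $\alpha$ — the points $\zeta_k^\alpha$ and weights $1/|B'(\zeta_k^\alpha)|$ vary continuously with $\alpha$ away from a measure-zero set — so Lemma~\ref{le:integration of Delta wit respect to alpha} already encodes the needed Bochner-integrability, and trace is a bounded linear functional on the finite-dimensional operator space, hence passes through the integral. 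Alternatively, one can avoid the issue entirely by expanding $\Tr(\Delta^\alpha_B T^p)$ as the finite sum $\sum_{k=1}^N \frac{1}{|B'(\zeta_k^\alpha)|}\phi^p(\zeta_k^\alpha)$ via~\eqref{eq:trace for general nu}, integrating this explicit scalar function of $\alpha$ against $dm(\alpha)$, and recognizing the result directly through~\eqref{eq:aleksandrov}; this route never mentions operator-valued integration at all and is the cleanest way to write the proof.
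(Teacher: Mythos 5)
Your overall route is the one the paper itself intends: the paper offers no proof beyond the sentence ``From~\eqref{eq:trace for Clark measure in K_B} and Lemma~\ref{le:integration of Delta wit respect to alpha} follows the next corollary,'' and your worry about interchanging the trace with the integral over $\alpha$ is indeed the harmless part (finite dimensions, bounded measurable dependence). The genuine gap is in the step you treat as routine: you apply~\eqref{eq:trace for Clark measure in K_B} ``for each fixed $\alpha\in\bbT$'' to one and the same operator $T$. That identity comes from the spectral decomposition~\eqref{eq:decomposition of circulant}, which holds only when $T$ lies in the Sedlock algebra $\BB^B_\alpha$ for the \emph{same} $\alpha$ that indexes $\Delta^\alpha_B$ and $\mu^B_\alpha$. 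A fixed $T=T_B[\phi+\alpha_0\bar B(\phi-\phi(0))]$ belongs to $\BB^B_{\alpha_0}$ only; for $\beta\ne\alpha_0$ the kernels $k^B_{\zeta^\beta_k}$ are not eigenvectors of $T$, and $\Tr(\Delta^\beta_B T^p)\ne\int\phi^p\,d\mu^B_\beta$ in general. Your own remark that $T$ ``corresponds to the Sedlock parameter $\alpha=1$'' makes the mismatch visible, since you then integrate over all $\alpha$ anyway; the same objection applies to your alternative route via~\eqref{eq:trace for general nu}, which has the same hypothesis.

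The failure is not cosmetic. Take $B(z)=z^2$, $\phi(z)=z$, $\alpha_0=1$, $p=2$. Then $|B'|\equiv 2$, so $T_B\left[\frac{1}{|B'|}\right]=\frac{1}{2}I$ and $\Delta^\beta_B=\frac{1}{2}I$ for every $\beta$; the matrix of $T=T_B[z+\bar z]$ in the basis $\{1,z\}$ is $\left(\begin{smallmatrix}0&1\\ 1&0\end{smallmatrix}\right)$, so $\Tr\left(T_B\left[\frac{1}{|B'|}\right]T^2\right)=\frac{1}{2}\Tr T^2=1$, whereas $\int\phi^2\,dm=\int z^2\,dm=0$. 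So your chain of equalities cannot all hold, and the link that breaks is $\Tr(\Delta^\beta_B T^2)=\int\phi^2\,d\mu^B_\beta$ for $\beta\ne 1$: here the left side equals $1$ for every $\beta$ while the right side equals $\beta$. To be fair, this defect is inherited from the paper — the corollary as stated is false for $p\ge 2$. What the two cited results actually yield is $\int_\bbT\Tr\bigl(\Delta^\alpha_B T_\alpha^p\bigr)\,dm(\alpha)=\int\phi^p\,dm$ with $T_\alpha=T_B[\phi+\alpha\bar B(\phi-\phi(0))]$ varying with $\alpha$; but then the left-hand side no longer collapses to $\Tr\left(T_B\left[\frac{1}{|B'|}\right]T^p\right)$ for a single $T$. (Note that the main results of the paper do not use this corollary: in Lemma~\ref{le:integrated general convergence} the fixed operator is $T_{B_n}[\psi]$ and a \emph{different} circulant approximant is used for each $\alpha$, so that argument is unaffected.)
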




 \section{Approximating by circulants}\label{se:approx by circulants}

In this section we will develop a procedure, in the case $B(0)=0$, for deriving properties of a truncated Toeplitz operator from a closely associated element of a Sedlock algebra. The idea comes from the case of classical Toeplitz matrices (see, for instance,~\cite[Chapter 4]{Gr}) and it is based on an enlargement of the model space.

Fix   $\alpha \in \Bbb{T}$, and let $B$ be a finite Blaschke product with $B(0)=0$ and  $\psi$ be a (general) standard symbol for a truncated Toeplitz on $K_B$.  Thus $\psi \in K_B+\overline{K_B}$ is of the form $\psi=\psi_+ +\bar B \psi_-$ with $\psi_+\in K_B$ and $\psi_-\in zK_{\hat B}$.
Let $u$ be an inner function that is a multiple of $B$; that is, $u=Bv$ for some inner function $v$. Obviously, $\psi$ is also a standard symbol for a truncated Toeplitz operator on $K_u$ (since $K_B+\overline{K_B}\subset K_u+\overline{K_u}$). We intend to show that, if we set
\[
\phi=\psi_+  + {\overline{\alpha} }v\psi_-
\]
then we can have good estimates for the difference between $T_u[\psi]$ and the operator $T_u[\phi+\alpha {\bar{u}}(\phi-\phi(0))]$ in $\BB^u_\alpha$.  Notice that $\phi\in K_B \subset  K_u$
 and
that $\phi(0)=\psi_+(0)$ (since $\psi_-(0)=0$).

We see that:
\[
\begin{split}
\phi+ \alpha \bar u(\phi-\phi(0))&
=\psi_+ + \bar \alpha v\psi_- + \alpha \bar u( \psi_+ + \bar \alpha  v\psi_- - \psi_+(0) ) \\
&= \psi_++  \bar B\psi_- + \bar \alpha v\psi_- + \alpha \bar u (\psi_+  - \psi_+(0))\\
&= \psi + \bar \alpha v\psi_- + \alpha \bar u (\psi_+  - \psi_+(0))\\
\end{split}
\]
and thus
\begin{equation}\label{eq:difference to circulant}
T_u[\phi+ \alpha \bar u(\phi-\phi(0))]- T_u[\psi]=
\bar \alpha T_u[v\psi_-]+ \alpha T_u[\bar u (\psi_+-\psi_+(0))].
\end{equation}

We can now show that the norms of the two operators on the right hand side of~\eqref{eq:difference to circulant} do not depend on which inner function $u$ we choose. We will interpret these operators as operators on all of $L_2$, rather than just on the subspace $K_u$.  We use throughout
the computations below the fact that $\psi, \psi_+, \psi_-$ are all bounded functions.

We use repeatedly that $K_u = K_B \oplus BK_v = K_v \oplus vK_B$  and that $P_{vK_B} = M_vP_BM_{\bar v}.$ This gives us that:
\[
T_u [v\psi_-] =  P_u M_{v\psi_-} P_u= P_u M_{v\psi_-} (P_B+P_{BK_v}).
\]
But, it is clear that $Im(M_{v\psi_-} P_{BK_v}) \subset u H^2$ and so $P_u M_{v\psi_-} P_{BK_v} = 0.$ Thus:
\[
T_u [v\psi_-] =   P_u M_{v\psi_-} P_B= (P_v+P_{vK_B}) M_{v\psi_-} P_B.
\]
Now we use the fact that $Im( M_{v\psi_-} P_B) \subseteq vH^2$ so that $P_v M_{v\psi_-} P_B = 0$ to obtain that:
\begin{equation}\label{eq:first corner of difference}
P_u M_{v\psi_-} P_u= P_{vK_B} M_{v\psi_-} P_B = M_v P_BM_{\psi_-}P_B.
\end{equation}
So $T_u[v\psi_-]$ is obtained by multiplying with the operator $T_B[\psi_-]$ by a unitary operator. In particular, the norm (uniform or Schatten-von Neumann) of $T_u[\psi_-]$ depends only on $B$ and $\psi$ and is independent of the choice of $v.$

In our analysis of the second operator, we simplify the notation by denoting $\psi_0=\psi_+-\psi_+(0)$ so that $\psi_0\in zK_{\hat B}$. We have
\[
P_u M_{\bar u \psi_0}P_u= P_u M_{\bar u \psi_0}(P_v+P_{vK_B}).
\]
Now, using the fact that $K_v \subset vH^2_-$  and that $\psi_0 \in K_B \subset BH^2_-$ we see that
$Im(M_{\bar u \psi_0} P_{v} )\subset {\bar u} B v H^2_- = H^2_-.$ So it is clear that $P_u M_{\bar u \psi_0} P_{v} = 0.$ Thus

\[
P_u M_{\bar u \psi_0}P_u=  P_u M_{\bar u \psi_0}P_{vK_B}.
\]
Next we notice that:
\[
P_u M_{\bar u \psi_0}P_{vK_B} = P_u M_{\bar u \psi_0} M_v P_B M_{\bar v} = P_u M_{\bar B \psi_0} P_B M_{\bar v}.
\]
Since $\psi_0 \in K_B$ we know that ${\bar B}  \psi_0 \subset H^2_-$ . But,  an element of $K_B$ is a function $h \in H^2$ which is orthogonal to $BH^2$ and when we multiply such a function by a (bounded) element of $H^2_-$ we get an element $h_1$ of $L^2$ which is orthogonal
to $BH^2$ and is therefore orthogonal to $BK_v.$ Thus
 we can conclude that
\begin{equation}\label{eq:second corner of difference}
P_u M_{\bar u \psi_0}P_u = (P_{BK_v } + P_B )M_{\bar B \psi_0} P_B M_{\bar v}=P_B M_{\bar B\psi_0}P_B M_{\bar v}.
\end{equation}
So, just as in the first case, we have obtained that any norm (uniform or Schatten-von Neumann) of $T_u[\bar u (\psi_+-\psi_+(0))]$ is independent of the choice of $v$. We summarize the conclusion of our argument in the following lemma.

\begin{lemma}\label{le:approx by circulant}
	Suppose $\alpha\in\bbT$,  $\psi= \psi_++\bar B \psi_-\in K_B+\overline{K_B}$. If $u=Bv$ for some inner function $v$, and $\phi=\psi_+ + {\bar \alpha} v\psi_-$, then $T_u[\phi+ \alpha {\bar u}(\phi-\phi(0))]\in\BB^u_\alpha$ and, for any $0<p\le \infty$, we have
	\[\| T_u[\phi+ \alpha {\bar u}(\phi-\phi(0))]-T_u[\psi]\|_p\leq C_p,\]
	where $C_p$ is a finite constant independent of $v$.
\end{lemma}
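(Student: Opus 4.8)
The plan is to derive the bound directly from the two identities~\eqref{eq:first corner of difference} and~\eqref{eq:second corner of difference}, which already express the two pieces on the right of~\eqref{eq:difference to circulant} as the given truncated Toeplitz operators on $K_B$ pre- and post-composed with unitaries. Concretely, from~\eqref{eq:difference to circulant} we have
\[
T_u[\phi+ \alpha {\bar u}(\phi-\phi(0))]-T_u[\psi]=
\bar \alpha\, T_u[v\psi_-]+ \alpha\, T_u[\bar u (\psi_+-\psi_+(0))],
\]
and by~\eqref{eq:first corner of difference} and~\eqref{eq:second corner of difference} the right side equals $\bar\alpha\, M_v P_B M_{\psi_-}P_B + \alpha\, P_B M_{\bar B\psi_0}P_B M_{\bar v}$, where $\psi_0=\psi_+-\psi_+(0)$. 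Since $M_v$ and $M_{\bar v}$ are unitary on $L^2$ and $|\alpha|=1$, the triangle inequality for the $p$-norm (quasi-norm when $0<p<1$, with the appropriate constant; but one may simply set
\[
C_p:=\|P_B M_{\psi_-}P_B\|_p+\|P_B M_{\bar B\psi_0}P_B\|_p
\]
when $p\ge 1$, absorbing a factor $2^{1/p-1}$ otherwise) gives
\[
\|T_u[\phi+ \alpha {\bar u}(\phi-\phi(0))]-T_u[\psi]\|_p\le C_p,
\]
and the key point is that $C_p$ involves only $B$, $\psi_+$, $\psi_-$, hence is independent of $v$.

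First I would record that $T_u[\phi+ \alpha {\bar u}(\phi-\phi(0))]\in\BB^u_\alpha$: this is immediate from Lemma~\ref{le:sedlock1} applied with the inner function $u$ (note $u(0)=B(0)v(0)=0$ since $B(0)=0$), because $\phi\in K_B\subset K_u$. Next I would invoke the two displayed computations~\eqref{eq:first corner of difference},~\eqref{eq:second corner of difference} already carried out in the text, which identify $P_u M_{v\psi_-}P_u = M_v(P_B M_{\psi_-}P_B)$ and $P_u M_{\bar u\psi_0}P_u = (P_B M_{\bar B\psi_0}P_B)M_{\bar v}$ as operators on $L^2$ (and these restrict to the corresponding operators $T_u[v\psi_-]$, $T_u[\bar u(\psi_+-\psi_+(0))]$ on $K_u$). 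Then I would note that $\psi_-$ and $\bar B\psi_0$ are bounded functions (as $\psi_-\in zK_{\hat B}$, $\psi_0\in zK_{\hat B}$, and $K_B$ consists of rational functions with poles off $\overline{\bbD}$ when $B$ is finite Blaschke), so $P_B M_{\psi_-}P_B$ and $P_B M_{\bar B\psi_0}P_B$ are finite-rank operators on the finite-dimensional space $K_B$, hence have finite $p$-norm for every $0<p\le\infty$; moreover these norms depend only on $B$ and $\psi$, not on $v$. Finally, unitary invariance of the Schatten–von Neumann $p$-norms (and of the operator norm) under left/right multiplication by $M_v$, $M_{\bar v}$ yields $\|T_u[v\psi_-]\|_p=\|P_B M_{\psi_-}P_B\|_p$ and $\|T_u[\bar u(\psi_+-\psi_+(0))]\|_p=\|P_B M_{\bar B\psi_0}P_B\|_p$, and the triangle (quasi-)inequality closes the argument.

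Honestly, almost all the work has already been done in the paragraphs preceding the lemma; the only genuine points to check are (a) that Lemma~\ref{le:sedlock1} applies on $K_u$, which needs $u(0)=0$, and (b) the mild technical fuss about the triangle inequality for $p<1$, where $\|A+B\|_p^p\le\|A\|_p^p+\|B\|_p^p$ and one adjusts $C_p$ accordingly. There is no real obstacle: the heart of the matter—that the two ``corner'' operators are just fixed operators on $K_B$ twisted by unitaries and so carry $v$-independent norms—is exactly the content of~\eqref{eq:first corner of difference} and~\eqref{eq:second corner of difference}. If I had to name a mild subtlety, it would be keeping careful track that the identities derived for operators on $L^2$ indeed give the claimed norms of the operators $T_u[\,\cdot\,]$ viewed on $K_u$; but since $T_u[\psi']=P_uM_{\psi'}P_u$ and the images computed lie in $K_u$, this is automatic.
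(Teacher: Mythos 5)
Your argument is correct and is essentially identical to the paper's own: the lemma is stated there as a summary of the computations leading to~\eqref{eq:difference to circulant}, \eqref{eq:first corner of difference} and~\eqref{eq:second corner of difference}, which express the two correction terms as fixed operators on $K_B$ multiplied by the unitaries $M_v$, $M_{\bar v}$, exactly as you use them. Your two added remarks --- that membership in $\BB^u_\alpha$ follows from Lemma~\ref{le:sedlock1} since $u(0)=0$, and the quasi-triangle adjustment for $p<1$ --- are harmless elaborations of details the paper leaves implicit.
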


\section{A Szeg\"o theorem for non-Blaschke sequences}\label{se:main theorem}

Suppose now that $\lambda_j\in \bbD$, $j\ge 1$. As in Section ~\ref{sse:clark measures}, define, for $n\ge 1$, $B_n=\prod_{j=1}^{n} (-\frac{|\lambda_j|}{\lambda_j}b_{\lambda_j})$.

The next theorem may be considered the central result of this paper; it gives a Szeg\"o theorem for truncated Toeplitz operators.

\begin{theorem}\label{th:general convergence for non-Blaschke}
	Suppose $\sum (1-|\lambda_j|)=\infty$, $B_n$ is defined as above, $\psi\in C(\bbT)$, and $p\in\bbN$. Then
		 \begin{equation}\label{eq:first general szego formula}
		 \Tr\left( T_{B_n}\left[\frac{1}{|B_n'|}\right](T_{B_n}[\psi])^p\right)\to \int_\bbT \psi^p\ dm.
		 \end{equation}
		
		 If $\psi$ is real-valued, then for every continuous function $g$ on $[ \inf\psi, \sup \psi  ]$ we have
		\begin{equation}\label{eq:second general szego formula}
		\Tr\left( T_{B_n}\left[\frac{1}{|B_n'|}\right]g(T_{B_n}[\psi])\right)\to \int_\bbT g\circ\psi\ dm.
		\end{equation}
\end{theorem}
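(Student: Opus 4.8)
The plan is to reduce the general continuous symbol $\psi$ to the case of a symbol lying in some $K_{B_N}+\overline{K_{B_N}}$, for which the circulant approximation of Section~\ref{se:approx by circulants} applies, and then to pass to the limit. First, I would fix $\varepsilon>0$ and use Corollary~\ref{co:desity fo kernels} to pick $N$ and a symbol $\psi_\varepsilon\in K_{B_N}+\overline{K_{B_N}}$ with $\|\psi-\psi_\varepsilon\|_\infty<\varepsilon$. Since each $T_{B_n}[\cdot]$ has norm controlled by the $L^\infty$-norm of its symbol, and since $\|T_{B_n}[1/|B_n'|]\|_1=1$ by~\eqref{eq:trace norm for clark measure}, the quantities $\Tr(T_{B_n}[1/|B_n'|](T_{B_n}[\psi])^p)$ and $\Tr(T_{B_n}[1/|B_n'|](T_{B_n}[\psi_\varepsilon])^p)$ differ by at most $C_p\varepsilon$, uniformly in $n$ (here one expands the difference of $p$-th powers telescopically and bounds each term using the trace-norm/operator-norm Hölder inequality); likewise $|\int\psi^p\,dm-\int\psi_\varepsilon^p\,dm|\le C_p\varepsilon$. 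So it suffices to prove~\eqref{eq:first general szego formula} for symbols in $K_{B_N}+\overline{K_{B_N}}$ for a fixed $N$.

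For such a symbol, I would fix $n\ge N$ and apply Lemma~\ref{le:approx by circulant} with $B=B_N$, $u=B_n$, $v=B_n/B_N$: writing $\psi_\varepsilon=\psi_++\bar B_N\psi_-$ in standard form and setting $\phi_n=\psi_++\bar\alpha v\psi_-$, we get an operator $C_n^\alpha:=T_{B_n}[\phi_n+\alpha\bar B_n(\phi_n-\phi_n(0))]\in\BB^{B_n}_\alpha$ with $\|C_n^\alpha-T_{B_n}[\psi_\varepsilon]\|_p\le C_p$ for every $p$, the constant independent of $n$. Then I would replace $T_{B_n}[\psi_\varepsilon]$ by $C_n^\alpha$ inside the trace at the cost of a term that is $O(1)$ in Schatten norms but must be shown to vanish after integration in $\alpha$; the key point is that the $O(1)$ error gets divided by the growing "dimension-like" quantity encoded in $T_{B_n}[1/|B_n'|]$. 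More precisely: by Corollary~\ref{co:trace after integration in K_B} applied on $K_{B_n}$, $\int_\bbT \Tr(T_{B_n}[1/|B_n'|](C_n^\alpha)^p)\,dm(\alpha)=\int_\bbT\phi_n^p\,dm$, and since $\phi_n$ differs from $\psi_\varepsilon$ only in replacing $\bar B_N$ by $\bar\alpha v$ on the factor $\psi_-$, a change of variables plus Alexandrov's identity~\eqref{eq:aleksandrov} should give $\int_\bbT\int_\bbT\phi_n^p\,dm\,dm(\alpha)\to\int_\bbT\psi_\varepsilon^p\,dm$ (the cross terms involving $v$ average out because $v=B_n/B_N$ is inner with increasingly many zeros, so $v$ and its powers tend weakly to $0$). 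Finally I would handle the error term $\Tr(T_{B_n}[1/|B_n'|]((T_{B_n}[\psi_\varepsilon])^p-(C_n^\alpha)^p))$: expanding telescopically, each summand is a product of $T_{B_n}[1/|B_n'|]$ with bounded operators and one factor of norm $\le C_p$ in $\CC_p$; using $\|T_{B_n}[1/|B_n'|]\|$ (operator norm) $=\max_k 1/|B_n'(\zeta_k^\alpha)|\le 2/\sum_{j\le n}(1-|\lambda_j|)\to 0$ by Lemma~\ref{le:blaschke argument derivative}(ii), together with the trace-norm bound $\|T_{B_n}[1/|B_n'|]\|_1=1$, one interpolates to show each term goes to $0$ uniformly in $\alpha$. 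Integrating in $\alpha$ and combining with the previous step yields~\eqref{eq:first general szego formula} for $\psi_\varepsilon$, hence for $\psi$.

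The second formula~\eqref{eq:second general szego formula} follows from the first by a standard Weierstrass approximation argument: when $\psi$ is real-valued, $T_{B_n}[\psi]$ is self-adjoint with spectrum in $[\inf\psi,\sup\psi]$, so for a polynomial $g=\sum c_p t^p$ the identity is just a finite linear combination of~\eqref{eq:first general szego formula}; for general continuous $g$ on that interval, approximate uniformly by polynomials and use again $\|T_{B_n}[1/|B_n'|]\|_1=1$ to control $|\Tr(T_{B_n}[1/|B_n'|](g(T_{B_n}[\psi])-g_{\mathrm{poly}}(T_{B_n}[\psi])))|\le \|g-g_{\mathrm{poly}}\|_\infty$, and the trivial estimate $|\int(g-g_{\mathrm{poly}})\circ\psi\,dm|\le\|g-g_{\mathrm{poly}}\|_\infty$ on the other side.

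The main obstacle I anticipate is the error-term step: making precise that the $O(1)$ Schatten-$p$ discrepancy between $T_{B_n}[\psi_\varepsilon]$ and the circulant $C_n^\alpha$, when multiplied by $T_{B_n}[1/|B_n'|]$ and traced, actually tends to zero. This requires the right interpolation between the operator-norm decay $\|T_{B_n}[1/|B_n'|]\|\to 0$ and the fixed trace-norm bound $\|T_{B_n}[1/|B_n'|]\|_1=1$, applied termwise to the telescoped difference of $p$-th powers — and one must check the bound is uniform in $\alpha\in\bbT$ so that integration in $\alpha$ is legitimate. A secondary technical point is verifying that $\int_\bbT\int_\bbT\phi_n^p\,dm\,dm(\alpha)\to\int_\bbT\psi_\varepsilon^p\,dm$, i.e. controlling the cross terms that appear when one expands $(\psi_++\bar\alpha v\psi_-)^p$; here the weak convergence to zero of powers of the inner function $v=B_n/B_N$ (whose zero set grows) together with Alexandrov's formula should do the job, but the bookkeeping of which monomials survive averaging in $\alpha$ versus in $m$ needs care.
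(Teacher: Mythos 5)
Your overall architecture --- reduce to a symbol in a fixed finite-dimensional space by the density result of Corollary~\ref{co:desity fo kernels}, approximate $T_{B_n}[\psi_\varepsilon]$ by a Sedlock element via Lemma~\ref{le:approx by circulant}, kill the error term by pairing the operator-norm decay $\|T_{B_n}[1/|B_n'|]\|\le 2/\sum_{j\le n}(1-|\lambda_j|)\to 0$ of Lemma~\ref{le:blaschke argument derivative}(ii) against the uniformly bounded trace norm of the discrepancy, and use Weierstrass approximation for~\eqref{eq:second general szego formula} --- is exactly the paper's, and those parts are sound. The genuine gap is in your evaluation of the main term. Corollary~\ref{co:trace after integration in K_B} cannot be invoked as you do: the identity~\eqref{eq:trace for Clark measure in K_B} computes $\Tr(\Delta^\beta_{B_n}T^p)$ only when $\beta$ equals the Sedlock parameter $\alpha$ of $T$, and what it produces is an integral against the \emph{Clark measure} $\mu^{B_n}_\alpha$, not against $dm$; so $\Tr\bigl(T_{B_n}[1/|B_n'|](C_n^\alpha)^p\bigr)=\int_\bbT\Tr(\Delta^\beta_{B_n}(C_n^\alpha)^p)\,dm(\beta)$ is not computable termwise by that formula. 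Worse, the quantity you propose to compute, $\int_\bbT\int_\bbT\phi_n^p\,dm\,dm(\alpha)$ with $\phi_n=\psi_++\bar\alpha v\psi_-$, equals $\int_\bbT\psi_+^p\,dm$ for every $n$, since every monomial carrying $\bar\alpha^k$ with $k\ge1$ is annihilated by the average in $\alpha$; this is not $\int_\bbT\psi_\varepsilon^p\,dm$ (take $\psi=z+\bar z$ and $p=2$). The ingredient you are missing is that on the support of $\mu^{B_n}_\alpha$ one has $B_n=\alpha$, hence $\bar\alpha v=\bar B_N$ and $\phi_n=\psi_\varepsilon$ there; consequently $\Tr(\Delta^\alpha_{B_n}(C_n^\alpha)^p)=\int\phi_n^p\,d\mu^{B_n}_\alpha=\int\psi_\varepsilon^p\,d\mu^{B_n}_\alpha$, and the convergence to $\int\psi_\varepsilon^p\,dm$ then comes from the weak-star convergence $\mu^{B_n}_\alpha\to m$ of Lemma~\ref{le:convergence of clark measures}(i) --- a second, independent use of the non-Blaschke hypothesis that your argument never makes. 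This dictates the order of operations in the paper: work at fixed $\alpha$ with the diagonal operator $\Delta^\alpha_{B_n}$ throughout (Lemmas~\ref{le:general convergence} and~\ref{le:convergence with alpha fixed, psi restricted }), and integrate in $\alpha$ only at the very end, via Lemma~\ref{le:integration of Delta wit respect to alpha} and dominated convergence, when the operator inside the trace is $(T_{B_n}[\psi_\varepsilon])^p$ and hence $\alpha$-independent.

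A secondary but real gap: the circulant apparatus (standard symbols $\psi_++\bar B\psi_-$ with $\psi_-\in zK_{\hat B}$, Lemma~\ref{le:sedlock1}, Lemma~\ref{le:approx by circulant}) requires $B_N(0)=0$, whereas $B_N(0)=\prod_{j\le N}|\lambda_j|$ need not vanish. The paper repairs this by conjugating with the unitary $U_{-\lambda_1}$ of Lemma~\ref{le:CGRW}, which replaces $B_n$ by $B_n^{-\lambda_1}$ vanishing at the origin, and accordingly approximates $\psi$ in $(K_{B_N^{-\lambda_1}}+\overline{K_{B_N^{-\lambda_1}}})\circ b_{\lambda_1}$ rather than in $K_{B_N}+\overline{K_{B_N}}$; Lemmas~\ref{le:action of U_a on kernels and measures} and~\ref{le:change of Clark by automorphism} are then needed to track the effect of this conjugation on $\Delta^\alpha_{B_n}$ and on the limiting measure. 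Your proposal does not address this point.
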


The proof of the theorem will use a series of intermediate results which have interest in themselves. The basic technique is introduced in the next lemma.

\begin{lemma}\label{le:general convergence}
	Fix $N\in\bbN$, $\alpha\in\bbT$, assume $B_N(0)=0$, and denote by $\zeta_{n,k}^\alpha$, $k=1,\dots, n$, the roots of the equation $B_n(z)=\alpha$. Suppose that for each $n\ge N$  we are given
	an atomic measure $\nu_n=\sum_{k=1}^{n}\nu^{n}_k \delta_{\zeta_{n,k}^\alpha}$; denote by $D_{\nu_n}$ the corresponding diagonal operator as defined by~\eqref{eq:definition of D_nu}. Assume that
	\begin{itemize}
		\item[(i)] $\nu_n$ is weakly convergent to some measure $\nu$.
		\item[(ii)] $\|D_{\nu_n}\|\to 0$.		
	\end{itemize}
	Then for any $\psi\in K_{B_N}+\overline{K_{B_N}}$ and $p\in\bbN$ we have
	\[
	\Tr(D_{\nu_n}(T_{B_n}[\psi])^p)\to\int \psi^p\, d\nu.
	\]
\end{lemma}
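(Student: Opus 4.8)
The plan is to reduce the general symbol $\psi$ to the case of a symbol that is an element of a fixed Sedlock algebra, where the trace can be computed explicitly via~\eqref{eq:trace for general nu}, and then control the error using the approximation Lemma~\ref{le:approx by circulant} together with hypothesis (ii). First I would fix $\psi=\psi_++\bar B_N\psi_-\in K_{B_N}+\overline{K_{B_N}}$ and, for each $n\ge N$, write $B_n=B_N v_n$ with $v_n$ inner. Applying Lemma~\ref{le:approx by circulant} with $u=B_n$ and $\phi_n=\psi_++\bar\alpha v_n\psi_-$ gives an element $C_n:=T_{B_n}[\phi_n+\alpha\bar B_n(\phi_n-\phi_n(0))]\in\BB^{B_n}_\alpha$ with $\|C_n-T_{B_n}[\psi]\|_p\le C_p$ uniformly in $n$, for every $0<p\le\infty$; in particular the uniform norms $\|T_{B_n}[\psi]\|$ are bounded, say by $M$, independently of $n$.

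Next I would expand $(T_{B_n}[\psi])^p=(C_n+E_n)^p$ where $E_n=T_{B_n}[\psi]-C_n$, obtaining $C_n^p$ plus a sum of $2^p-1$ mixed terms each containing at least one factor $E_n$. Using the bound $\|E_n\|_\infty\le C_\infty$ and $\|C_n\|,\|T_{B_n}[\psi]\|\le M'$ (absorbing constants), each mixed term $X$ satisfies $\|D_{\nu_n}X\|_1\le\|D_{\nu_n}\|\cdot\|X\|_1$; but a more careful bookkeeping is needed since $\|D_{\nu_n}\|_1$ need not go to zero — only $\|D_{\nu_n}\|$ does. The right move is to write $D_{\nu_n}X = (D_{\nu_n})^{1/?}$... actually the clean estimate is $|\Tr(D_{\nu_n}X)|\le\|D_{\nu_n}\|_1\|X\|_\infty$, and $\|D_{\nu_n}\|_1=\sum_k|\nu^n_k|=\|\nu_n\|$ which is bounded (weak convergence of $\nu_n$ forces uniformly bounded masses, at least along the relevant subsequence — or one invokes uniform boundedness). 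So $|\Tr(D_{\nu_n}X)|\le \|\nu_n\|\cdot\|X\|_\infty$; this alone does not vanish. The correct approach: each mixed term factors as $E_n Y_n$ or $Y_n E_n$ with $\|Y_n\|_\infty$ bounded, and we estimate $|\Tr(D_{\nu_n}E_n Y_n)|\le\|D_{\nu_n}\|_1\|E_n\|\,\|Y_n\|$ — still no decay. Instead one uses $|\Tr(D_{\nu_n}Z)|\le\|D_{\nu_n}\|\cdot\|Z\|_1$ and notes $E_n$ has $\|E_n\|_1\le C_1$, $\|C_n\|,\|T_{B_n}[\psi]\|_\infty\le M$, so each mixed term $X$ has $\|X\|_1\le p\,C_1 M^{p-1}$, hence $|\Tr(D_{\nu_n}X)|\le\|D_{\nu_n}\|\cdot p\,C_1M^{p-1}\to 0$ by (ii). This is the key point where both hypotheses are used: (ii) kills the error terms, after Lemma~\ref{le:approx by circulant} has supplied the uniform trace-norm bound on $E_n$.

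It remains to handle the main term $\Tr(D_{\nu_n}C_n^p)$. Since $C_n\in\BB^{B_n}_\alpha$, formula~\eqref{eq:trace for general nu} (valid on the finite-dimensional space $K_{B_n}$, using the decomposition~\eqref{eq:decomposition of circulant} of elements of $\BB^{B_n}_\alpha$ into diagonal operators on the eigenbasis $k^{B_n}_{\zeta^\alpha_{n,k}}$) gives $\Tr(D_{\nu_n}C_n^p)=\int_\bbT\phi_n^p\,d\nu_n$. I would then show $\phi_n\to\psi$ in a sense strong enough to pass to the limit against the weakly convergent $\nu_n$: on $\bbT$ one has $\phi_n=\psi_++\bar\alpha v_n\psi_-$, and $v_n$ need not converge, so $\phi_n$ does not converge pointwise. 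However, on the support of $\nu_n$, namely at the points $\zeta^\alpha_{n,k}$ where $B_n=B_N v_n=\alpha$, one has $v_n(\zeta^\alpha_{n,k})=\alpha/B_N(\zeta^\alpha_{n,k})$, so $\phi_n=\psi_++\bar\alpha\cdot(\alpha/\overline{B_N})\psi_-\cdot$... rather $\phi_n(\zeta)=\psi_+(\zeta)+\bar\alpha v_n(\zeta)\psi_-(\zeta)=\psi_+(\zeta)+\overline{B_N(\zeta)}\psi_-(\zeta)=\psi(\zeta)$ there. Thus $\int_\bbT\phi_n^p\,d\nu_n=\int_\bbT\psi^p\,d\nu_n\to\int_\bbT\psi^p\,d\nu$ by (i), since $\psi\in C(\bbT)$. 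Combining the three pieces finishes the proof.

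The main obstacle I anticipate is the error-term estimate: one must notice that Lemma~\ref{le:approx by circulant} gives a bound in \emph{every} Schatten norm, in particular the trace norm $\|E_n\|_1\le C_1$, and pair this with the operator-norm decay $\|D_{\nu_n}\|\to 0$ via Hölder-type inequality for the trace $|\Tr(AB)|\le\|A\|\,\|B\|_1$ — getting the two factors on the correct sides. A secondary subtlety is justifying~\eqref{eq:trace for general nu} for $C_n$: this requires that $C_n$, being in $\BB^{B_n}_\alpha$, is genuinely diagonal in the reproducing-kernel eigenbasis with eigenvalues $\phi_n(\zeta^\alpha_{n,k})$, which is exactly~\eqref{eq:decomposition of circulant}; here $\phi_n$ is a bounded function on the finite set of Clark points so no regularity issue arises.
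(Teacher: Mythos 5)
Your proposal is correct and follows essentially the same route as the paper: the same comparison element $T_{B_n}[\phi_n+\alpha\bar B_n(\phi_n-\phi_n(0))]$ from Lemma~\ref{le:approx by circulant}, the same pairing of the uniform trace-norm bound on the error with $\|D_{\nu_n}\|\to 0$ via $|\Tr(AB)|\le\|A\|\,\|B\|_1$, and the same observation that $\phi_n=\psi$ on the Clark points so that \eqref{eq:trace for general nu} reduces the main term to $\int\psi^p\,d\nu_n$. The only cosmetic difference is that you expand $(C_n+E_n)^p$ binomially where the paper uses the telescoping identity $A^p-B^p=\sum_k A^k(A-B)B^{p-k-1}$; both yield the same estimate.
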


\begin{proof}
	 If $\psi= \psi_+ +\bar B_N \psi_-\in K_{B_N}+\overline{K_{B_N}}$, we define $\phi_n=\psi_+ +\bar{\alpha} \frac{B_n}{B_N}\psi_-$ and $\tilde{\phi}_n= \phi_n+\alpha\bar B_n(\phi_n-\phi_n(0))$ for $n\geq N$. Applying Lemma~\ref{le:approx by circulant} for $B=B_N$ and $u=B_n$ we obtain that the trace norm $\|T_{B_n}[\psi]- T_{B_n}[\tilde\phi_n]\|_1$ is bounded by a constant independent of $n$. Therefore, using condition (ii),
	\[
	\begin{split}
	&|\Tr (D_{\nu_n}(T_{B_n}[\psi])^p) - \Tr(D_{\nu_n}(T_{B_n}[\tilde\phi_n])^p)| \\
	&\qquad  \leq
	\sum_{k=0}^{p-1}
	\left|\Tr \left( D_{\nu_n} \left(T_{B_n}[\psi]^k \left( T_{B_n}[\psi]-T_{B_n}[\tilde\phi_n]\right) T_{B_n}[\tilde\phi_n]^{p-k-1} \right)\right)\right| \\
	&\qquad\le M \|D_{\nu_n}\|\cdot
	\|T_{B_n}[\psi]- T_{B_n}[\tilde\phi_n]\|_1\to 0.
	\end{split}
	\]
	
	Now, according to~\eqref{eq:trace for general nu}, we have
	\[
	\Tr(D_{\nu_n}T_{B_n}[\tilde\phi_n]^p)=\int \phi_n^p\,d\nu_n.
	\]	
	Since $B_n(\zeta^{\alpha}_{n,k})=\alpha$, we have $\psi(\zeta^{(n)}_j)=\phi_n(\zeta^{(n)}_j)$ for all $j=1,\dots, n$, and thus the last integral is $\int \psi\, d\nu_n$, which by (i) converges to $\int \psi\, d\nu$.
\end{proof}

Next, with a suitable assumption on the symbol, we eliminate the restriction $B(0)=0$.

\begin{lemma}\label{le:convergence with alpha fixed, psi restricted }
	Let $\sum (1-|\lambda_j|)=\infty$, $B_n$ be as above, $\alpha\in\bbT$ fixed, and $\Delta^\alpha_{B_n}$ as defined by~\eqref{eq:definition of Delta}.  Suppose $\psi\in (K_{B_N^{-\lambda_1}}+\overline{K_{B_N^{-\lambda_1}}})\circ b_{\lambda_1}$ and $p\in\bbN$. Then
		\begin{equation}\label{eq:general convergence of traces with nu}
			\Tr(\Delta^\alpha_{B_n}(T_{B_n}[\psi])^p)\to\int \psi^p\, dm.
		\end{equation}
\end{lemma}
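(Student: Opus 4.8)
The plan is to deduce the statement from Lemma~\ref{le:general convergence} (which handles exactly this kind of limit, but requires the base inner function to vanish at the origin) by transporting everything through the unitary $U_a$ of Lemma~\ref{le:CGRW} with $a=-\lambda_1$. The point of this choice is that $\lambda_1$ is a zero of every $B_n$, so $B_n^a(0)=B_n(b_a(0))=B_n(\lambda_1)=0$ for all $n\ge 1$; in particular $B_N^a(0)=0$, and $B_N^a$ divides $B_n^a$ because $B_N$ divides $B_n$ (if $B_n=B_Ng$ with $g$ inner, then $B_n^a=B_N^a\,(g\circ b_a)$ with $g\circ b_a$ inner). The hypothesis on $\psi$ says $\psi=F\circ b_{\lambda_1}$ with $F\in K_{B_N^a}+\overline{K_{B_N^a}}$, and since $b_{\lambda_1}\circ b_a=\mathrm{id}$ we get $\psi\circ b_a=F$.

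First I would conjugate everything by $U_a$. By Lemma~\ref{le:CGRW}, $U_aT_{B_n}[\psi]U_a^*=T_{B_n^a}[\psi\circ b_a]=T_{B_n^a}[F]$; by Lemma~\ref{le:action of U_a on kernels and measures}(iii), $U_a\Delta^\alpha_{B_n}U_a^*=D_{\tilde\nu_n}$ with $\tilde\nu_n=|b'_a|\,\mu^{B_n^a}_\alpha$, which by the chain rule is the atomic measure carrying mass $1/|B_n'(b_a(\eta))|$ at each root $\eta$ of $B_n^a(z)=\alpha$. Since trace and $p$-th powers are invariant under conjugation by a unitary,
\[
\Tr\bigl(\Delta^\alpha_{B_n}(T_{B_n}[\psi])^p\bigr)=\Tr\bigl(D_{\tilde\nu_n}(T_{B_n^a}[F])^p\bigr),
\]
so it is enough to show the right-hand side tends to $\int\psi^p\,dm$.

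Then I would apply Lemma~\ref{le:general convergence} to the chain $(B_n^a)_{n\ge N}$, the point $\alpha$, the measures $\tilde\nu_n$ and the symbol $F$: its proof used only the divisibility $B_N\mid B_n$ and $B_N(0)=0$, both of which we have verified for the conjugated chain, so it applies. Its two hypotheses are checked as follows. Condition (ii): $D_{\tilde\nu_n}$ is diagonal in the orthonormal basis of normalised reproducing kernels at the Clark points of $B_n^a$, so $\|D_{\tilde\nu_n}\|$ equals its largest atom, which is at most $(\min_{\zeta\in\bbT}|B_n'(\zeta)|)^{-1}\le 2/\sum_{j=1}^{n}(1-|\lambda_j|)$ by Lemma~\ref{le:blaschke argument derivative}(ii), and this tends to $0$ since $\sum(1-|\lambda_j|)=\infty$. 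Condition (i): by Lemma~\ref{le:change of Clark by automorphism}, $\mu^{B_n^a}_\alpha=(b_{\lambda_1})_*\!\bigl(|b'_{\lambda_1}|\,\mu^{B_n}_\alpha\bigr)$; since $\mu^{B_n}_\alpha\to m$ weak-$*$ by Lemma~\ref{le:convergence of clark measures}(i) and $\mu\mapsto(b_{\lambda_1})_*(|b'_{\lambda_1}|\mu)$ is weak-$*$ continuous, $\mu^{B_n^a}_\alpha\to(b_{\lambda_1})_*(|b'_{\lambda_1}|\,m)=m$, the last equality being the change of variables $d\Arg b_{\lambda_1}(e^{it})=|b'_{\lambda_1}(e^{it})|\,dt$ of Lemma~\ref{le:blaschke argument derivative}(i); hence $\tilde\nu_n=|b'_a|\,\mu^{B_n^a}_\alpha\to|b'_a|\,m=:\nu$. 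Lemma~\ref{le:general convergence} now yields $\Tr(D_{\tilde\nu_n}(T_{B_n^a}[F])^p)\to\int F^p\,d\nu=\int F^p\,|b'_{-\lambda_1}|\,dm$, and one more change of variables on $\bbT$ rewrites this as $\int(F\circ b_{\lambda_1})^p\,dm=\int\psi^p\,dm$, completing the proof. I expect the only genuine work to be the bookkeeping of how $\Delta^\alpha$ and the Clark measures transform under $U_a$ (supplied by Lemma~\ref{le:action of U_a on kernels and measures}) together with checking that Lemma~\ref{le:general convergence} really does apply verbatim to the non-product chain $(B_n^a)$.
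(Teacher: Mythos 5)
Your proposal is correct and follows essentially the same route as the paper: conjugate by $U_{-\lambda_1}$ to reduce to the chain $B_n^{-\lambda_1}$ with $B_n^{-\lambda_1}(0)=0$, verify hypotheses (i) and (ii) of Lemma~\ref{le:general convergence} using Lemmas~\ref{le:convergence of clark measures}, \ref{le:change of Clark by automorphism} and \ref{le:blaschke argument derivative}(ii), and undo the change of variables at the end. The only (harmless) differences are bookkeeping: you obtain the weak-$*$ limit of $\tilde\nu_n$ via the pushforward identity rather than applying Lemma~\ref{le:convergence of clark measures} directly to the chain $(B_n^a)$, and you are slightly more explicit than the paper in checking that Lemma~\ref{le:general convergence} applies to that conjugated chain.
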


\begin{proof}
	Denote $a=-\lambda_1$, so $B_N(-a)=0$, Then for any $n\ge N$ we have
	\[
	B_n^a(0)=B_n\circ b_a(0)=B_n(-a) =0.
	\]
	
	We have, by Lemma~\ref{le:CGRW} and Lemma~\ref{le:action of U_a on kernels and measures} (iii)
	\[
	\begin{split}
	\Tr(\Delta^\alpha_{B_n}(T_{B_n}[\psi])^p)
	&= 	\Tr((U_a\Delta^\alpha_{B_n}U_a^* )(U_aT_{B_n}[\psi]U_a^*)^p)\\
	&= \Tr (D_{\tilde{\nu}_\alpha^{a, n} }(T_{B_n^a}[\psi\circ b_a] )^p) ,	\end{split}
	\]
	where
	\[
	\tilde{\nu}_\alpha^{a,n}=\frac{1}{|b'_{-a}\circ b_a|} \mu^{B_n^a}_\alpha.
	\]
	
	Consider the sequence of measures $	\tilde{\nu}_\alpha^{a,n}$. By Lemma~\ref{le:convergence of clark measures} (i),
	\[
	\tilde{\nu}_\alpha^{a,n}\to \frac{1}{|b'_{-a}\circ b_a|} m.
	\]
	On the other hand,
	\[
	\|  D_{\tilde{\nu}_\alpha^{a, n} }\|
	=\| \Delta^\alpha_{B_n}\|=\sup_{1\le j\le n} \frac{1}{|B_n'(\zeta^{(n)}_j)|}\le
	\sup_{\zeta\in\bbT} \frac{1}{|B_n'(\zeta)|}\le \frac{2}{\sum_{j=1}^n(1-|\lambda_j|)},
	\]
	where the last inequality is a consequence of Lemma~\ref{le:blaschke argument derivative} (ii).
	
	The assumption on $\psi$ implies that $\psi\circ b_a\in K_{B_N^{a}}+\overline{K_{B_N^{a}}}$. Therefore, applying 
	Lemma~\ref{le:general convergence}, 	\[
	\Tr(\Delta^\alpha_{B_n}(T_{B_n}[\psi])^p)
	= \Tr (D_{\tilde{\nu}_\alpha^{a, n} }(T_{B_n^a}[\psi\circ b_a] )^p) \to \int \frac{1}{|b'_{-a}\circ b_a|} (\psi\circ b_a)^p\, dm. 	\]
	By Lemma~\ref{le:change of Clark by automorphism}, the last integral is precisely $\int \psi^p\, dm$, which finishes the proof.
\end{proof}

The next step is to integrate with respect to $\alpha\in\bbT$ in order to obtain a version of~\eqref{eq:first general szego formula} for a restricted class of functions~$\psi$.

\begin{lemma}\label{le:integrated general convergence}
	Let $\sum (1-|\lambda_j|)=\infty$, $B_n$ be as above,     $\psi\in (K_{B_N^{-\lambda_1}}+\overline{K_{B_N^{-\lambda_1}}})\circ b_{\lambda_1}$ , and $p\in\bbN$.
	Then
	\[
	\Tr\left( T_{B_n}\left[\frac{1}{|B_n'|}\right](T_{B_n}[\psi])^p\right)\to \int_\bbT \psi^p\ dm.
	\]
	
\end{lemma}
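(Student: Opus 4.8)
The plan is to deduce this from the fixed-$\alpha$ statement of Lemma~\ref{le:convergence with alpha fixed, psi restricted } by integrating over $\alpha\in\bbT$ and invoking the bounded convergence theorem. Since $B_n$ is a finite Blaschke product, Lemma~\ref{le:integration of Delta wit respect to alpha} applies and yields the operator identity
\[
T_{B_n}\left[\frac{1}{|B_n'|}\right]=\int_\bbT \Delta^\alpha_{B_n}\, dm(\alpha).
\]
Multiplying on the right by $(T_{B_n}[\psi])^p$ and taking the trace, I would first record that the interchange of trace and integral is legitimate: all operators act on the finite-dimensional space $K_{B_n}$, and $\alpha\mapsto\Delta^\alpha_{B_n}$ is a continuous operator-valued function of $\alpha$ (it is a symmetric function of the roots $\zeta_{n,k}^\alpha$ of $B_n(z)=\alpha$, which vary continuously with $\alpha$, with $|B_n'(\zeta_{n,k}^\alpha)|$ bounded and bounded away from $0$ on the compact circle). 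Hence
\[
\Tr\left( T_{B_n}\left[\frac{1}{|B_n'|}\right](T_{B_n}[\psi])^p\right)=\int_\bbT \Tr\left(\Delta^\alpha_{B_n}(T_{B_n}[\psi])^p\right)\, dm(\alpha).
\]

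The main point to check — and the only step requiring care — is a uniform domination of the integrand, independent of both $\alpha$ and $n$. Here I would use that $\|\Delta^\alpha_{B_n}\|_1=1$ by~\eqref{eq:trace norm for clark measure}, together with the fact that $\psi$ is a bounded function (a rational function continuous on $\bbT$), so that $\|T_{B_n}[\psi]\|\le\|\psi\|_\infty$ because a compression of $M_\psi$ cannot increase the norm. Consequently
\[
\left|\Tr\left(\Delta^\alpha_{B_n}(T_{B_n}[\psi])^p\right)\right|\le \|\Delta^\alpha_{B_n}\|_1\,\big\|(T_{B_n}[\psi])^p\big\|\le \|\psi\|_\infty^p
\]
for every $\alpha\in\bbT$ and every $n\ge N$.

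Finally, Lemma~\ref{le:convergence with alpha fixed, psi restricted } gives, for each fixed $\alpha\in\bbT$, that the integrand converges to the constant $\int_\bbT\psi^p\,dm$ as $n\to\infty$. Applying the bounded convergence theorem on the probability space $(\bbT,m)$, I may pass to the limit under the integral sign and conclude
\[
\int_\bbT \Tr\left(\Delta^\alpha_{B_n}(T_{B_n}[\psi])^p\right)\, dm(\alpha)\longrightarrow \int_\bbT\left(\int_\bbT\psi^p\,dm\right)dm(\alpha)=\int_\bbT\psi^p\,dm,
\]
which is the claimed statement. Everything beyond the uniform bound is a routine combination of Lemma~\ref{le:integration of Delta wit respect to alpha} and Lemma~\ref{le:convergence with alpha fixed, psi restricted }, so I expect no further obstacles.
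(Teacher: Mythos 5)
Your argument is essentially the paper's own proof: identify $T_{B_n}\left[\frac{1}{|B_n'|}\right]$ with $\int_\bbT\Delta^\alpha_{B_n}\,dm(\alpha)$ via Lemma~\ref{le:integration of Delta wit respect to alpha}, interchange trace and integral on the finite-dimensional space, and pass to the limit in $n$ using Lemma~\ref{le:convergence with alpha fixed, psi restricted } and dominated convergence. One small inaccuracy: since $B_n(0)=\prod_{j\le n}|\lambda_j|$ need not vanish, \eqref{eq:trace norm for clark measure} does not apply and $\|\Delta^\alpha_{B_n}\|_1$ equals $\Re\frac{\alpha+B_n(0)}{\alpha-B_n(0)}$ rather than $1$; but this is still bounded by $\frac{1+|\lambda_1|}{1-|\lambda_1|}$ uniformly in $\alpha$ and $n$ (cf.~\eqref{eq:again trace}), so your domination and the conclusion are unaffected.
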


\begin{proof}
	By Lemma~\ref{le:convergence with alpha fixed, psi restricted } we have for each $\alpha\in\bbT$
		\begin{equation}\label{eq:th 5.2}
		\Tr(\Delta^\alpha_{B_n}(T_{B_n}[\psi])^p)\to\int \psi^p\, dm.
		\end{equation}

On the other hand, 	from Lemma~\ref{le:integration of Delta wit respect to alpha} it follows that
		\[
		\begin{split}
		\int_\bbT \Tr(\Delta^\alpha_{B_n}(T_{B_n}[\psi])^p)dm(\alpha)&=\Tr\left(\left(\int_\bbT \Delta^\alpha_{B_n} dm(\alpha)\right)  (T_{B_n}[\psi])^p\right) \\
		&=\Tr\left( T_{B_n}\left[\frac{1}{|B_n'|}\right](T_{B_n}[\psi])^p\right).
		\end{split}
		\]
		Then~\eqref{eq:th 5.2} together with Lebesgue's dominated convergence theorem imply that
		\[
		\Tr\left( T_{B_n}\left[\frac{1}{|B_n'|}\right](T_{B_n}[\psi])^p\right)\to \int_\bbT\left( \int_\bbT \psi^p\,  dm\right) dm(\alpha)=\int_\bbT \psi^p\ dm. \qedhere
		\]
		\end{proof}

We may now go to the proof of the main theorem.

\begin{proof}[Proof of Theorem~\ref{th:general convergence for non-Blaschke}]
Suppose now $\psi\in C(\bbT)$ and $p\in\bbN$.
	Let $\epsilon>0$ be given.  Corollary~\ref{co:desity fo kernels} implies that $\bigvee_n (K_{B_N^{-\lambda_1}}+\overline{K_{B_N^{-\lambda_1}}})\circ b_{\lambda_1}$ is also dense in $C(\bbT)$. Next we choose 
 $N\in\bbN$ and $\psi_N\in (K_{B_N^{-\lambda_1}}+\overline{K_{B_N^{-\lambda_1}}})\circ b_{\lambda_1}$
	such that $\|\psi-\psi_N\|_{\infty}\le\epsilon$, where  $\|.\|_\infty$ is the supremum norm; we assume also that $\|\psi\|_\infty, \|\psi_N\|_\infty\le M$. 
	
	Applying now Lemma~\ref{le:integrated general convergence} to $\psi_N$, we see that there exists $n_0$ such that for $n\ge n_0$ we have
	\[
	\Big|	\Tr (T_{B_n}\left[\frac{1}{|B_n'|}\right]((T_{B_n}[\psi_N])^p)-\int \psi_N^p\,dm\Big|\le \epsilon.
	\]
	Also, since $\|T_{B_n}[\phi]\|\le \|\phi\|_\infty$ for any $\phi\in C(\bbT)$, we have
	\[
	\begin{split}
	\|T_{B_n}[\psi_N])^p- T_{B_n}[\psi])^p\|&
	\le \sum_{k=0}^{p-1} \|T_{B_n}[\psi_N]^k\| \|T_{B_n}[\psi]^{p-k-1} \|
	\|T_{B_n}[\psi_N]- T_{B_n}[\psi]\|\\
	&\le pM^{p-1}\epsilon.
	\end{split}
	\]
	Using~\eqref{eq:trace norm for clark measure}, we have
\[	
\|	\Delta^\alpha_{B_n}\|_1=
	\Tr \Delta^\alpha_{B_n} =\Re \frac{\alpha+B_n(0)}{\alpha-B_n(0)}\le \frac{1+|B_n(0)|}{1-|B(0)|}\le \frac{1+|\lambda_1|}{1-|\lambda_1|}.
\]	
Therefore
\begin{equation}\label{eq:again trace}
\left\Vert T_{B_n}\left[\frac{1}{|B_n'|}\right]\right\Vert_1 \le \int_\bbT \Vert \Delta^\alpha_{B_n}\Vert_1 dm(\alpha) \le  \frac{1+|\lambda_1|}{1-|\lambda_1|},
\end{equation}
and
	\[
	\begin{split}
	&\left|\Tr(T_{B_n}\left[\frac{1}{|B_n'|}\right]((T_{B_n}[\psi])^p)- \int\psi^p\, dm \right|\\
	&\qquad \le
	\Tr \left( T_{B_n}\left[\frac{1}{|B_n'|}\right](T_{B_n}[\psi])^p-T_{B_n}\left[\frac{1}{|B_n'|}\right](T_{B_n}[\psi_N])^p   \right)
	\\
	&\qquad\qquad +\Big|	\Tr (T_{B_n}\left[\frac{1}{|B_n'|}\right](T_{B_n}[\psi_N])^p-\int \psi_N^p\, dm\Big|	+ \int |\psi_N^p - \psi^p|\, dm
	\\
	&\qquad \le pM^{p-1}\frac{1+|\lambda_1|}{1-|\lambda_1|}\epsilon+\epsilon+ pM^{p-1}\epsilon.
	\end{split}
	\]
	The above estimate proves formula~\eqref{eq:first general szego formula}.		
	
	Suppose now that $\psi$ is real valued and $g$ is continuous on $[\inf \psi, \sup \psi]$. Note that  $T_{B_n}[\psi]$ is then a bounded self-adjoint operator with spectrum contained in $[\inf \psi, \sup \psi]$. Let $(P_k)_{k\in\bbN}$ be a sequence of polynomials that converges to $g$ uniformly on $[\inf \psi, \sup \psi]$.

	By~\eqref{eq:first general szego formula}, for every $k$ we have
	\[
	\Tr\left( T_{B_n}\left[\frac{1}{|B_n'|}\right]P_k(T_{B_n}[\psi])\right)\to \int_\bbT P_k\circ\psi\ dm.
	\]
	But
	\[
	\begin{split}
	&	\left|
		\Tr\left( T_{B_n}\left[\frac{1}{|B_n'|}\right]P_k(T_{B_n}[\psi])\right)-\Tr\left( T_{B_n}\left[\frac{1}{|B_n'|}\right]g(T_{B_n}[\psi])\right)
		\right|\\
	&\qquad	\le \left\Vert T_{B_n}\left[\frac{1}{|B_n'|}\right]\right\Vert_1  \Vert (P_k-g)(T_{B_n}[\psi])\Vert\\
	&\qquad  \le \frac{1+|\lambda_1|}{1-|\lambda_1|} \|P_k-g\|_\infty
	\end{split}
	\]
	where on the last line we have used~\eqref{eq:again trace}.
	
	Since obviously
	\[
		\left| \int_\bbT P_k\circ\psi-g\circ\psi\ dm \right|\le \|P_k-g\|_\infty,
	\]
	letting $k\to\infty$ proves~\eqref{eq:second general szego formula}.
\end{proof}

In the classical case of Toeplitz matrices, we have $B_n=z^n$, $|B_n'|\equiv n $, and $T_{B_n}\left[\frac{1}{|B_n'|}\right]=\frac{1}{n}I$. We obtain therefore a version of a classical Szeg\"o limit theorem: if $T_n$ are truncated Toeplitz matrices corresponding to the symbol $a$, then
\[
\frac{1}{n} \Tr T_n^p\to \int_{\bbT} a^n\, dm.
\]

It is interesting to compare Theorem~\ref{th:general convergence for non-Blaschke} with the main result in~\cite{B}, where a formula is obtained  for the asymptotics of $\det T_{B_n}[\psi]$. In the classical case the determinant and the trace versions of the Szeg\"o limit theorem are closely connected, and one can deduce one version from the other. This seems not to be the case for general truncated Toeplitz operators.

\section{Blaschke sequences}\label{blaschke sequences}

Theorem~\ref{th:general convergence for non-Blaschke} concerns sequences that do not 
satisfy the Blaschke condition. It is possible to obtain Szeg\"o results for Blaschke 
sequences, but these are less elegant and require supplementary hypotheses. First, Lemma~\ref{le:convergence with alpha fixed, psi restricted } is true for a Blaschke sequence, provided we add the condition $\|\Delta^\alpha_{B_n}\|\to 0$. Similarly, Lemma~\ref{le:integrated general convergence} is true if we assume that $\|\Delta^\alpha_{B_n}\|\to 0$ for almost all $\alpha\in\bbT$. As for the main Theorem~\ref{th:general convergence for non-Blaschke}, a new complication arises, since Lemma~\ref{le:density of kernels} is no longer true.. We do, however, 
have the following precise result;  recall that the space $\mathfrak{L}_\Lambda$ is defined in section 1, before Lemma~\ref{le:density of kernels}.

\begin{theorem}\label{th:convergence for Blaschke sequence } 
Suppose that $(\lambda_j)$ is a Blaschke sequence (i.e. that $\sum_{j=1}^{\infty}(1-|\lambda_j|)<\infty $) and each $\lambda_j$  is repeated $m_j$ times, with $m_j$ a finite integer. 
Let  $B_n$ be given by~(\ref{eq:definition of Bn}) and $p\in\bbN$.
Suppose that, for some fixed $i\ge 1$,   $ \psi\circ b_{-\lambda_i}\in\mathfrak{L}_\Lambda$.

	(1) If  $\alpha\in\bbT$  and  if   $\|\Delta^\alpha_{B_n}\|\to 0$,  then
	\begin{equation}\label{eq:general convergence of traces}
	\Tr(\Delta^\alpha_{B_n}(T_{B_n}[\psi])^p)\to\int \psi^p\, d\mu^B_\alpha.
	\end{equation}
	
	(2) If $\|\Delta^\alpha_{B_n}\|\to 0$ for almost all $\alpha\in\bbT$, then
	\[
	\Tr\left( T_{B_n}\left[\frac{1}{|B_n'|}\right](T_{B_n}[\psi])^p\right)\to \int_\bbT \psi^p\ dm.
	\]
\end{theorem}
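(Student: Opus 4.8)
The strategy is to mirror the argument used for the non-Blaschke case (Lemmas~\ref{le:convergence with alpha fixed, psi restricted } through Theorem~\ref{th:general convergence for non-Blaschke}), tracking carefully which steps used the divergence $\sum(1-|\lambda_j|)=\infty$ and replacing each with the hypothesis $\|\Delta^\alpha_{B_n}\|\to 0$ (respectively, for a.e.\ $\alpha$). First, for part (1), I would argue as in Lemma~\ref{le:convergence with alpha fixed, psi restricted }: set $a=-\lambda_i$ (rather than $-\lambda_1$), so that $B_N(-a)=0$ for $N$ large enough (since $\lambda_i$ is among the zeros), hence $B_n^a(0)=0$ for all $n\ge N$. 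Using Lemma~\ref{le:CGRW} and Lemma~\ref{le:action of U_a on kernels and measures}(iii), rewrite
\[
\Tr(\Delta^\alpha_{B_n}(T_{B_n}[\psi])^p) = \Tr\bigl(D_{\tilde\nu_\alpha^{a,n}}(T_{B_n^a}[\psi\circ b_a])^p\bigr),
\qquad \tilde\nu_\alpha^{a,n}=\tfrac{1}{|b'_{-a}\circ b_a|}\mu_\alpha^{B_n^a}.
\]
Now the only place the non-Blaschke hypothesis entered was: (i) the weak-$*$ convergence of $\mu_\alpha^{B_n^a}$ — here one instead uses Lemma~\ref{le:convergence of clark measures}(ii), so $\tilde\nu_\alpha^{a,n}\to\frac{1}{|b'_{-a}\circ b_a|}\mu_\alpha^{B^a}$; and (ii) the estimate $\|\Delta^\alpha_{B_n}\|\le 2/\sum(1-|\lambda_j|)\to 0$, which we now \emph{assume} directly as $\|\Delta^\alpha_{B_n}\|\to 0$ (equivalently $\|D_{\tilde\nu_\alpha^{a,n}}\|\to 0$, since $U_a$ is unitary). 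The hypothesis $\psi\circ b_{-\lambda_i}\in\mathfrak L_\Lambda$ guarantees $\psi\in (K_{B_N^a}+\overline{K_{B_N^a}})\circ b_a$ for $N$ large (each generator $z^m k^{m+1}_{\lambda_j}$ lies in $K_{B_N}$ for $N$ large). Then Lemma~\ref{le:general convergence} applies verbatim and gives convergence to $\int \frac{1}{|b'_{-a}\circ b_a|}(\psi\circ b_a)^p\,d\mu_\alpha^{B^a}$, which by Lemma~\ref{le:change of Clark by automorphism} equals $\int \psi^p\,d\mu_\alpha^B$ (the change-of-variable formula $\mu_\alpha^{B^a}=(b_{-a})_*(|b'_{-a}|\mu_\alpha^B)$ produces exactly the Radon--Nikodym factor that cancels).

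For part (2), I would integrate the conclusion of part (1) against $dm(\alpha)$ over $\bbT$, exactly as in Lemma~\ref{le:integrated general convergence}. By Lemma~\ref{le:integration of Delta wit respect to alpha},
\[
\int_\bbT \Tr\bigl(\Delta^\alpha_{B_n}(T_{B_n}[\psi])^p\bigr)\,dm(\alpha)
=\Tr\Bigl(T_{B_n}\Bigl[\tfrac{1}{|B_n'|}\Bigr](T_{B_n}[\psi])^p\Bigr).
\]
Under the hypothesis $\|\Delta^\alpha_{B_n}\|\to 0$ for a.e.\ $\alpha$, part (1) gives, for a.e.\ $\alpha$,
\[
\Tr\bigl(\Delta^\alpha_{B_n}(T_{B_n}[\psi])^p\bigr)\to \int_\bbT \psi^p\,d\mu_\alpha^B.
\]
A uniform integrable bound in $\alpha$ is supplied by $|\Tr(\Delta^\alpha_{B_n}(T_{B_n}[\psi])^p)|\le \|\Delta^\alpha_{B_n}\|_1\|T_{B_n}[\psi]\|^p\le \|\psi\|_\infty^p\,\Re\frac{\alpha+B_n(0)}{\alpha-B_n(0)}$, and since $|B_n(0)|=\prod|\lambda_j|$ stays bounded away from $1$ (Blaschke condition), this is dominated uniformly in $n$ by an $L^1(dm(\alpha))$ function; dominated convergence then yields
\[
\Tr\Bigl(T_{B_n}\Bigl[\tfrac{1}{|B_n'|}\Bigr](T_{B_n}[\psi])^p\Bigr)\to \int_\bbT\!\!\int_\bbT \psi^p\,d\mu_\alpha^B\,dm(\alpha)=\int_\bbT \psi^p\,dm,
\]
where the last equality is Alexandrov's disintegration formula~\eqref{eq:aleksandrov} applied to the continuous function $\psi^p$.

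\textbf{Main obstacle.} The delicate point is the applicability of Lemma~\ref{le:general convergence} and, behind it, Lemma~\ref{le:approx by circulant}: I must check that the finite-constant bound $C_p$ on $\|T_{B_n}[\psi]-T_{B_n}[\tilde\phi_n]\|_1$ (independent of $n$) genuinely survives in the Blaschke setting. Re-reading Lemma~\ref{le:approx by circulant}, its proof is purely algebraic — decomposing $K_u=K_B\oplus BK_v$, manipulating projections — and nowhere uses $\sum(1-|\lambda_j|)=\infty$; it only needs $u=B_n$ to be a multiple of $B=B_N$, which holds here by construction. So this step transfers directly, and the only real substitutions are the two already identified: Lemma~\ref{le:convergence of clark measures}(ii) in place of (i), and the \emph{assumed} vanishing of $\|\Delta^\alpha_{B_n}\|$ in place of the automatic estimate from Lemma~\ref{le:blaschke argument derivative}(ii). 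One should also take care that the density used in Theorem~\ref{th:general convergence for non-Blaschke} (via Corollary~\ref{co:desity fo kernels}) is \emph{not} available, which is precisely why the theorem is stated only for the restricted class $\psi\circ b_{-\lambda_i}\in\mathfrak L_\Lambda$ rather than for all $\psi\in C(\bbT)$.
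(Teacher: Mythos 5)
Your overall strategy is the one the paper intends: Section~\ref{blaschke sequences} gives no written proof of this theorem, only the remark that Lemmas~\ref{le:convergence with alpha fixed, psi restricted } and~\ref{le:integrated general convergence} survive once $\|\Delta^\alpha_{B_n}\|\to 0$ is assumed instead of derived, and your two substitutions (Lemma~\ref{le:convergence of clark measures}(ii) in place of (i), the hypothesis on $\|\Delta^\alpha_{B_n}\|$ in place of Lemma~\ref{le:blaschke argument derivative}(ii)) are exactly right. Your observation that Lemma~\ref{le:approx by circulant} is purely algebraic and transfers, the identification of the limit $\int\psi^p\,d\mu_\alpha^B$ via Lemma~\ref{le:change of Clark by automorphism}, and the dominated-convergence bound $\|\Delta^\alpha_{B_n}\|_1\le\frac{1+|\lambda_1|}{1-|\lambda_1|}$ followed by Alexandrov's formula~\eqref{eq:aleksandrov} in part (2) are all correct.

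The one step that does not close as written is the translation of the hypothesis on $\psi$. To apply Lemma~\ref{le:general convergence} to the sequence $(B_n^a)_n$, $a=-\lambda_i$, you need $\psi\circ b_a\in K_{B_N^a}+\overline{K_{B_N^a}}$ for some finite $N$; since the zeros of $B_N^a=B_N\circ b_{-\lambda_i}$ are the points $b_{\lambda_i}(\lambda_j)$, this space is spanned by reproducing kernels at $b_{\lambda_i}(\lambda_j)$ and their conjugates. Your parenthetical justification ("each generator $z^mk^{m+1}_{\lambda_j}$ lies in $K_{B_N}$") only yields $\psi\circ b_a=\psi\circ b_{-\lambda_i}\in K_{B_N}+\overline{K_{B_N}}$, i.e.\ kernels at the points $\lambda_j$ themselves --- a different space --- so Lemma~\ref{le:general convergence} does not apply ``verbatim'' (note also that membership in $(K_{B_N^a}+\overline{K_{B_N^a}})\circ b_a$ gives information about $\psi\circ b_{-a}$, not about $\psi\circ b_a$; compare the $\circ\, b_{\lambda_1}$ in the hypothesis of Lemma~\ref{le:convergence with alpha fixed, psi restricted }). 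The repair is the identity one gets from $k_\lambda\circ b_{-\lambda_i}\in\mathrm{span}\{k_{b_{\lambda_i}(\lambda)},1\}$ (and its analogues for higher-order kernels and conjugates), namely $\mathfrak{L}_\Lambda\circ b_{-\lambda_i}=\mathfrak{L}_{b_{\lambda_i}(\Lambda)}=\bigcup_N\bigl(K_{B_N^{-\lambda_i}}+\overline{K_{B_N^{-\lambda_i}}}\bigr)$; this shows that the condition which actually feeds into Lemma~\ref{le:general convergence} is $\psi\in\mathfrak{L}_\Lambda$, equivalently $\psi\circ b_{-\lambda_i}\in\mathfrak{L}_{b_{\lambda_i}(\Lambda)}$, rather than $\psi\circ b_{-\lambda_i}\in\mathfrak{L}_\Lambda$ as you (following the statement) have it. You should either prove this identification explicitly and note the resulting reading of the hypothesis, or restate the hypothesis; as the proof stands, the sentence asserting that the hypothesis guarantees the required membership is the gap.
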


Unfortunately, it does not seem possible to give simple conditions that would characterize those sequences for which
\begin{equation}\label{eq:|Delta|to zero}
\|\Delta^\alpha_{B_n}\|\to 0.
\end{equation}
The next two examples suggest the intricacy of the problem. First we give an example of a Blaschke sequence for which~\eqref{eq:|Delta|to zero} is satisfied.

\begin{example}
	Suppose $(\lambda_j)$ is a sequence of points in $\bbD$ obtained by choosing 
$m^2$ evenly distributed points on each circle of radius $r_m=1-\frac{1}{m^4}.$ We have then
	\[
	\sum(1-|\lambda_j|)=\sum_m m^2\cdot \frac{1}{m^4} <\infty,
	\]
	and therefore the sequence is Blaschke.
	
	Fix $m\in\bbN$. For any $e^{it}\in \bbT$ there exists a $j\in \bbN$ such that $|\lambda_j|=r_m$ and $|e^{it}-\lambda_j|\le \frac{10}{m^2}$;  therefore, using Lemma~\ref{le:blaschke argument derivative}
	\[
	P_{\lambda_j}(e^{it})=\frac{1-|\lambda_j|^2}{|e^{it}-\lambda_j|^2}\ge \frac{1/m^4}{100/m^4}=\frac{1}{100}.
	\]
	and so, for any $e^{it}\in \bbT$,	
	\[
	\sum_{|\lambda_j|=r_m} P_{\lambda_j}(e^{it})\ge \frac{1}{100} .
	\]
	If we fix $N$, and choose $n\ge \max\{j\in \bbN: |\lambda_j|=r_m, 0\le m\le N  \}$, then
	\[
	\inf\{|B_n'(\zeta): \zeta\in \bbT  \}\ge N/100.
	\]
	So in this case $\inf\{|B_n'(\zeta)|: \zeta\in\bbT\}\to \infty$ when $n\to\infty$, and thence $\|\Delta^\alpha_{B_n}\|\to 0$, for any $\alpha\in\Bbb{T}$.

\end{example}

\begin{example}

The second example exhibits the opposite behavior. We show that, for certain Blaschke sequences $(\lambda_j )$ the hypothesis ~\eqref{eq:|Delta|to 
zero} is not satisfied, and, in the case where $\alpha=1, \lambda_1 = 0$ 
and $\psi = \bar{z},$ the conclusion~\eqref{eq:general convergence of 
traces} is also false; that is: 
\[
\Tr (\Delta^1_{B_n}T_{B_n}[\psi])\not\to \int \psi \, d\mu_1^B.
\]

So, let $(\lambda_j )$ be a real valued Blaschke sequence with 
$\lambda_1 = 0.$ We will add other hypotheses as necessary. Following the approximation procedure of Section~\ref{se:approx by circulants} for the finite Blaschke product
$ B_2(z)=-z(-b_{\lambda_2}(z))$ (where $B_n$  is defined 
by~(\ref{eq:definition of Bn}) and the remark after it), we set $N=2$, $\psi (z) = \bar z$ and 
$v = \prod_{j=3}^{n} (-b_{\lambda_j}).$ Then  $u=B_n=B_2 v$, 
 $\psi_+=0$, $\psi_-=b_{\lambda_2}(z)$,  and $\phi=B_n/z$. Formula~\eqref{eq:difference to circulant} then becomes
\[
T_{B_n}[B_n/z+\bar B_n(B_n/z- B_n/z(0))]-T_{B_n}[\bar z]= T_{B_n}[B_n/z].
\]

From~\eqref{eq:trace for Clark measure in K_B} and the fact that $B_n\equiv 1$ on the support of $\mu^{B_n}_1$, we obtain
\[
\Tr \left(\Delta^{1}_{B_n} T_{B_n}[B_n/z+\bar B_n(B_n/z- (B_n/z)(0))]\right)=\int_\Bbb{T} B_n/z \, d\mu_1^{B_n}= \int_\Bbb{T} \bar z \, d\mu_1^{B_n}.
\]
Thus, for $n\to\infty$,
\[
\Tr \left(\Delta^{1}_{B_n} T_{B_n}[B_n/z+\bar B_n(B_n/z- (B_n/z)(0))]\right) \to \int_\Bbb{T} \bar z \, d\mu^B_1.
\]

So, in order to achieve our purpose of  showing that
\[
\Tr (\Delta^1_{B_n}T_{B_n}[\bar z])\not\to \int_\Bbb{T} \bar z \, d\mu_1^B,
\]
we have to prove that
\begin{equation}\label{eq:what we have to prove}
\Tr (\Delta^1_{B_n}T_{B_n}[B_n/z])\not\to 0.
\end{equation}

We next use~\cite[Section 5]{Sa}, where it is shown that
\[
T_{B_n}[B_n/z]= \tilde k_0^{B_n}\otimes k^{B_n}_0.
\]

Denote by $\zeta^n_k$  the solutions to $B_n(\zeta)=1$. Notice that, since $-1$ is such a solution,  $\zeta^n_1=-1$. We have
\[
\Delta^1_{B_n} =\sum_{k=1}^{n} \frac{1}{|B'_n(\zeta^n_k)|}
\left( \frac{k^{B_n}_{\zeta^n_k}}{\| k^{B_n}_{\zeta^n_k} \|} \otimes \frac{k^{B_n}_{\zeta^n_k}}{\| k^{B_n}_{\zeta^n_k} \|} \right)
= \sum_{k=1}^{n} \frac{1}{|B'_n(\zeta^n_k)|^2} (k^{B_n}_{\zeta^n_k} \otimes k^{B_n}_{\zeta^n_k}  )
\]
(since $\| k^{B_n}_{\zeta^n_k} \|^2=k^{B_n}_{\zeta^n_k}(\zeta^n_k)= |B'_n(\zeta^n_k)| $ by~\eqref{eq:formula for norm of k_zeta}).

Using the usual formulas
\[
(x\otimes y)(z\otimes t)= \<z, y\> (x\otimes t),
\quad
\Tr (x\otimes y)= \<x, y\>,
\]
we obtain
\[
\Delta^1_{B_n}T_{B_n}[B_n/z] =
\sum_{k=1}^{n} \frac{1}{|B'_n(\zeta^n_k)|^2} \< \tilde k^{B_n}_0, k^{B_n}_{\zeta^n_k} \> (k^{B_n}_{\zeta^n_k}\otimes k^{B_n}_0)=
\sum_{k=1}^{n} \frac{\tilde k^{B_n}_0(\zeta^n_k)}{|B'_n(\zeta^n_k)|^2} (k^{B_n}_{\zeta^n_k}\otimes k^{B_n}_0),
\]
and
\[
\Tr\big(\Delta^1_{B_n}T_{B_n}[B_n/z]  \big)=
\sum_{k=1}^{n} \frac{\tilde k^{B_n}_0(\zeta^n_k)}{|B'_n(\zeta^n_k)|^2}
\< k^{B_n}_{\zeta^n_k}, k^{B_n}_0  \>=
\sum_{k=1}^{n} \frac{\tilde k^{B_n}_0(\zeta^n_k) \overline{k^{B_n}_0(\zeta^n_k)}}{|B'_n(\zeta^n_k)|^2}.
\]
We have $\tilde k^{B_n}_0=B_n/z$, so $\tilde k^{B_n}_0(\zeta^n_k)=B_n(\zeta^n_k)\bar \zeta^n_k=\bar \zeta^n_k$, while  $k^{B_n}_0\equiv 1$, so $k^{B_n}_0(\zeta^n_k)=1$. Thus we have arrived at the formula
\begin{equation}\label{eq:trace for the example}
\Tr\big(\Delta^1_{B_n}T_{B_n}[B_n/z]  \big)=
\sum_{k=1}^{n} \frac{\bar\zeta^n_k}{|B'_n(\zeta^n_k)|^2}=
-\frac{1}{|B'_n(-1)|^2}+\sum_{k=2}^{n} \frac{\bar\zeta^n_k}{|B'_n(\zeta^n_k)|^2}.
\end{equation}

Now we begin putting hypotheses on the rest of the sequence $(\lambda_j).$ We suppose 
that, $\lambda_j \nearrow 1$ and that $\lambda_j>1/2$ for $j\ge 2$, which implies that
\begin{equation*}
|B_n'(-1)|\le 1+\frac{4}{9} \sum_{j=2}^{n} (1-\lambda_j^2),
\end{equation*}
so, if we assume that the $\lambda_j$ tend sufficiently fast to 1, we may assume that $|B_n'(-1)|\le 3/2$.

To estimate the remaining sum, remember that by Lemma~\ref{le:blaschke argument derivative} (i)  we have
\begin{equation*}
|B'_n(\zeta)|=\sum_{j=1}^{n} \frac{1-\lambda_j^2}{|\zeta-\lambda_j|^2}=
1+\sum_{j=2}^{n} \frac{1-\lambda_j^2}{|\zeta-\lambda_j|^2}.
\end{equation*}
It follows that for fixed $\zeta$, $|B'_n(\zeta)|$ increases to $|B'(\zeta)|$, while for fixed $n$ it increases on both half circles of $\bbT$ as we are getting closer to 1.

Finally, we add the assumption that the sequence  $\lambda_j$ has been chosen so that $|B'(\zeta)|\le 11/10$ for all $\zeta$ belonging to the arc $J$ that connects $e^{i/10}$ and $e^{-i/10}$ and contains $-1$. Then, for $\zeta\in J$, $|B_n'(\zeta)|\le |B'(\zeta)|\le 11/10$, and so, by Lemma~\ref{le:blaschke argument derivative}, $\frac{d}{dt}\Arg B_n(e^{it})\le 11/10$.

%
%
%

  Since  $\frac{d}{dt}\Arg B_n(e^{it})\le
11/10$ for $t\in [1/10, \pi]$, we have
\[|\Arg B_n(-1)-\Arg B_n(e^{i/10})|\le 11/10(\pi-\theta)\le 11\pi/10.
\]

Suppose $\zeta^n_2=e^{i\eta_2}$ is the next zero of $B_n(\zeta)=1$ going from $-1$ towards 1 on the upper semicircle. Then $\eta_2\in (0, 1/10)$, and, since $|\Arg B_n(-1)-\Arg B_n(e^{i\eta_2})|=2\pi$, we must have
\[
|\Arg B_n(e^{i/10})-\Arg B_n(e^{i\eta_2})|\ge 9\pi/10.
\]
But, using Lagrange's formula, there is some point $\theta_0\in (\eta_2, 1/10)$ such that
\[
\begin{split}
|\Arg B_n(e^{i/10})-\Arg B_n(e^{i\eta_2}|&=(\frac{1}{10}-\eta_2) \frac{d}{dt} \Arg B_n(e^{i\theta_0})\\
&\le \frac{1}{10}|B_n'(e^{i\theta_0})|\le \frac{1}{10}  |B_n'(e^{i\eta_2})|,
\end{split}
\]
the last inequality being a consequence of $0<\eta_2<\theta_0$. Therefore
$|B_n'(e^{i\eta_2})|\ge 9\pi$.

A similar argument can be used on the lower semicircle. Finally,
 since   $|B_n'(\zeta)|$ increases as we are getting closer to 1, we may conclude that $|B'_n(\zeta^n_k)|\ge 9\pi$ for any $k\ge 2$.

Remember now that
\[
\sum_{k=1}^{n}\frac{1}{|B'_n(\zeta^n_k)|}=\|\mu^{B_n}_1\|=1.
\]
We have then, starting with~\eqref{eq:trace for the example},
\[
\begin{split}
|\Tr\big(\Delta^1_{B_n}T_{B_n}[B_n/z]  \big)|
&\ge
\frac{1}{|B'_n(-1)|^2}-\sum_{k=2}^{n} \frac{1}{|B'_n(\zeta^n_k)|^2}\\
&\ge \frac{4}{9}-\frac{1}{9\pi} \sum_{k=2}^{n} \frac{1}{|B'_n(\zeta^n_k)|}\\
&\ge \frac{4}{9}-\frac{1}{9\pi}>\frac{1}{3}.
\end{split}
\]
Therefore the inequality (\ref{eq:what we have to prove}) is proved, which concludes the example.
\end{example}

\end{document}